\documentclass{amsart}
\usepackage{amsfonts}
\usepackage{mathrsfs}
\usepackage[abs]{overpic}

\usepackage{palatino} 

\usepackage{hyperref}

\textwidth = 6.0 in
\textheight = 8.87 in
\oddsidemargin = 0.25 in
\evensidemargin = 0.25 in
\topmargin = -0.25 in

\newtheorem{theorem}{Theorem}[section]
\newtheorem{lemma}[theorem]{Lemma}

\newtheorem{question}[theorem]{Question}
\theoremstyle{definition}

\def\R{\mathbb{R}}
\def\Z{\mathbb{Z}}

\def\dfn#1{{\em #1}}
\def\co{\colon\thinspace}
\newcommand{\dist}{\operatorname{dist}}

\theoremstyle{remark}
\newtheorem{remark}[theorem]{Remark}

\numberwithin{equation}{section}


\begin{document}

\title[The arc complex and contact geometry]{The arc complex and contact geometry:\\ non-destabilizable planar open book decompositions\\ of the tight contact 3-sphere}


\author{John Etnyre }
\address{School of Mathematics \\ Georgia Institute of Technology}
\email{etnyre@math.gatech.edu}
\urladdr{\href{http://www.math.gatech.edu/~etnyre}{http://www.math.gatech.edu/\~{}etnyre}}

\author{Youlin Li}
\address{Department of Mathematics \\ Shanghai Jiao Tong University}
\email{liyoulin@sjtu.edu.cn}

\begin{abstract}
In this note we introduce the (homologically essential) arc complex of a surface as a tool for studying properties of open book decompositions and contact structures. After characterizing destabilizability in terms of the essential translation distance of the monodromy of an open book we given an application of this result to show that there are planer open books of the standard contact structure on $S^3$ with 5 (or any number larger than 5) boundary components that do not destabilize. We also show that any planar open book with 4 or fewer boundary components does destabilize. 
\end{abstract}

\maketitle
\section{Introduction}

The Giroux correspondence between open book decompositions and contact structures on 3--manifolds has been a key tool in contact geometry and low dimensional topology for quite some time. In studying relations between open book decompositions and contact structures one frequently wants to know if the open book decomposition is in some sense ``minimal'' or at least ``non-destabilizable". The main theme explored in this note is destabilizability of open book decompositions. 

The bindings of open book decompositions of $S^3$ are in fact fibered links. The construction and study of fibered links in $S^3$ has a long history, see for example \cite{GirouxGoodman06, Harer82, MelvinMorton86, Stallings78}.   Hedden  \cite{Hedden10} proved (Baader and Ishikawa \cite{BaaderIshikawa09} later independently re-proved and Baker, Etnyre and Van Horn-Morris generalized \cite{BakerEtnyreVanHorn-Morris12})  that a fibered link in $S^3$ supports the unique tight contact structure on $S^3$ if and only if it is quasipositive. Also in \cite{BaaderIshikawa09}, Baader and Ishikawa raised the following question: Does there exist a quasipositive fiber surface, other than a disk, from which we cannot de-plumb a Hopf band? In the terminology of this paper, that is: Does there exist an open book decomposition that supports the standard tight contact structure on $S^3$, other than the $(D^2, \text{Id})$, that does not destabilize?

In \cite{BakerEtnyreVanHorn-Morris12}, Baker, Etnyre, and Van Horn-Morris and in \cite{Wand??}, Wand found  examples of open book decompositions that support the standard tight contact structure on $S^3$, have page genus $2$, and cannot be destabilized down to the trivial open book $(D^2, \text{Id})$.  Thus there are non-destabilizable examples of genus 2, or 1.  (We note the results in this paper imply that if their examples destabilized to a planar open book then they would destabilize to the trivial open book, thus their examples prove the existence of a genus 2 or 1 non-destabilizable open book.)

In this paper, we completely answer this question in the planar case. We first note the existence of non-destabilizable examples with 5 or more boundary components.  

\begin{theorem}\label{thm1}
For each integer $n\geq 5$, there is a planar open book decomposition $(\Sigma_{n}, \phi_{n})$ which supports the tight contact structure on $S^3$,  has $n$ binding components,  and cannot be destabilized.
\end{theorem}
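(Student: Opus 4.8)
The plan is to exhibit the family $(\Sigma_n,\phi_n)$ completely explicitly and then invoke the arc‑complex machinery to obstruct destabilization. Thus the argument breaks into three parts: (1) choose a planar surface and monodromy; (2) verify that the supported contact $3$–manifold is $(S^3,\xi_{\mathrm{std}})$; and (3) verify that the monodromy has essential translation distance large enough that the arc‑complex characterization of destabilizability prohibits a destabilization. Granting that characterization (the main tool of the paper, established in terms of the essential translation distance of the monodromy), part (3) is the whole point of the theorem, and parts (1)--(2) are there to supply suitable input.

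For part (1) I would take $\Sigma_n$ to be the $n$–holed sphere (equivalently, the disk with $n-1$ open subdisks removed) and let $\phi_n$ be an explicit product of right–handed Dehn twists about a small collection of curves $c_1,\dots,c_k\subset\Sigma_n$, chosen so that the $c_i$ ``fill'' $\Sigma_n$ strongly enough that $\phi_n$ pushes every homologically essential arc far from itself. In the planar case a homologically essential properly embedded arc is exactly one joining two distinct boundary circles, so there are infinitely many isotopy classes to control, indexed by how they wind around the holes. The most natural candidates are chain–like configurations: a linear or cyclic chain of curves each enclosing two or three consecutive holes, with enough overlap that no essential arc can avoid $\bigcup c_i$. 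For part (2): because $\phi_n$ is a product of right–handed twists, the open book is Stein fillable, hence tight; one then checks by Kirby calculus that the underlying $3$–manifold is $S^3$ (equivalently, that the surgery diagram built from the $c_i$ reduces to the empty diagram). Once the ambient manifold is identified as $S^3$, the binding is a fibered link in $S^3$ whose monodromy is a positive word, hence quasipositive, so by Hedden's theorem \cite{Hedden10} it supports the unique tight contact structure $\xi_{\mathrm{std}}$; the $n$ binding components are just the $n$ components of $\partial\Sigma_n$.

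Part (3) is the heart. By the characterization, $(\Sigma_n,\phi_n)$ admits a destabilization precisely when its monodromy has small essential translation distance, so it suffices to show that for \emph{every} homologically essential arc $\alpha\subset\Sigma_n$ the image $\phi_n(\alpha)$ lies far from $\alpha$ in the homologically essential arc complex --- in particular $\alpha$ and $\phi_n(\alpha)$ cannot be realized disjointly, and $\phi_n$ fixes no essential arc. The method is the standard one for twist products: put $\alpha$ in minimal position with respect to $c_1,\dots,c_k$, track the vector of geometric intersection numbers $\bigl(\abs{\alpha\cap c_i}\bigr)_i$, and use the inequality bounding $\abs{\tau_c(\gamma)\cap\delta}$ from below in terms of $\abs{\gamma\cap c}$, $\abs{c\cap\delta}$ and $\abs{\gamma\cap\delta}$ to follow what happens to $\alpha$ under the successive twists making up $\phi_n$. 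Since all twists are positive there is no cancellation, and the filling hypothesis rules out the degenerate case in which $\alpha$ misses every $c_i$; what remains is to check that in all other cases $\phi_n(\alpha)$ is forced to meet $\alpha$ essentially.

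The main obstacle is to make this bookkeeping close up \emph{uniformly} over the infinite family of essential arcs, and in doing so to pin down exactly which configurations $\{c_i\}$ work --- this is also where the bound $n\geq 5$ must enter. With few boundary components one cannot simultaneously arrange that the $c_i$ fill $\Sigma_n$ in the required strong sense, that the word $\prod\tau_{c_i}$ is positive, and that the resulting surgery gives $S^3$; something has to give, which is consistent with the companion result that every planar open book of $(S^3,\xi_{\mathrm{std}})$ with at most $4$ boundary components does destabilize. So the delicate step is not any single verification but the design of an explicit family that threads all three requirements for every $n\geq 5$, together with the resulting uniform intersection estimate; the Kirby–calculus identification of $S^3$ and the appeal to the arc‑complex criterion are comparatively routine.
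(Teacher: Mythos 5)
Your framework matches the paper's at the top level --- exhibit an explicit planar $(\Sigma_n,\phi_n)$, verify it gives $(S^3,\xi_{\mathrm{std}})$ via positivity of the twists plus Kirby calculus, and then show the essential translation distance exceeds~$1$ so that Theorem~\ref{dist2} forbids destabilization --- and parts (1) and (2) of your outline agree with what the paper does. But for part (3), which you correctly flag as the heart of the theorem, what you offer is a plan rather than a proof, and the plan rests on a method the paper deliberately avoids. You propose to lower-bound $d_e(\alpha,\phi_n(\alpha))$ for every homologically essential arc $\alpha$ by tracking geometric intersection numbers with the twist curves and appealing to a ``twist-lemma'' inequality; you then concede that making this ``close up uniformly over the infinite family of essential arcs'' is the delicate step and that you have not even pinned down the curves. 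That unresolved step is the entire content of the theorem. Moreover, the twist inequalities you have in mind are standard for simple closed curves; the version for properly embedded arcs with endpoints pinned to marked boundary points is more delicate, positivity of the word does not by itself preclude the arcs $\alpha$ and $\phi_n(\alpha)$ from being isotoped disjoint (which is exactly the $d_e=1$ case you must exclude), and there is no obvious uniform estimate over the infinitely many isotopy classes that wind around the holes.

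The missing idea is the capping-off reduction, Lemma~\ref{lem:b1}: if a pair of arcs has distance $>1$ in the surface obtained by capping off a boundary component, then it had distance $>1$ already. Since a homologically essential arc on a planar surface joins two \emph{distinct} boundary circles, one can always cap off all but a few boundary components, and this collapses the infinite family of arc classes into a finite list of small open books (on an annulus and on four-holed spheres). For those small open books the paper does \emph{not} do arc-complex combinatorics at all: it identifies the underlying $3$-manifolds by Kirby calculus and then shows they are not destabilizations by honest $3$-manifold topology --- a first-homology count, Moser's classification of lens-space surgeries on the trefoil, Gabai's Property~R, and a fundamental-group computation (Lemmas~\ref{lem4},~\ref{lem2},~\ref{lem3}). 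The threshold $n\geq 5$ is then simply the number of boundary circles required to draw the curve configuration of Figure~\ref{fig:sigman}, not the point at which some intersection estimate begins to succeed. Without the capping-off reduction or an equivalent mechanism for turning ``for all essential arcs'' into a finite check, your plan for part (3) does not close, and so the proposal as written does not prove the theorem.
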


However, if the binding number is small enough, then the planar open books can be destabilized.

\begin{theorem}\label{thm2} Suppose $(\Sigma, \phi)$ is a planar open book decomposition which supports the standard tight contact structure on $S^3$ other than the $(D^2, \text{Id})$, and has at most $4$ binding components, then $(\Sigma, \phi)$ can be destabilized.  
\end{theorem}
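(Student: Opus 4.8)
The plan is to combine the destabilization criterion established earlier in the paper — roughly, that $(\Sigma,\phi)$ destabilizes precisely when some homologically essential properly embedded arc $\gamma\subset\Sigma$ is moved by $\phi$ to an arc at essential translation distance at most $1$ — with a classification of the possible monodromies when the page is planar with few boundary components. Throughout I use that $(S^3,\xi_{std})$ is Stein fillable, so by Wendl's theorem on planar open books of Stein fillable contact structures the monodromy of any planar open book supporting it is freely isotopic to a product of positive Dehn twists $\phi=\tau_{c_1}\cdots\tau_{c_k}$; moreover $H_1(S^3)=0$ forces $(\mathrm{id}-\phi_*)$ to be an isomorphism of $H_1(\Sigma)\cong\Z^{\,b-1}$, which turns such a positive factorization into a concrete linear constraint on how often each isotopy class of curve may appear. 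Since conjugate monodromies give the same open book, it suffices to treat $\phi$ up to the mapping class group (and cyclic reordering of the factorization).

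For $b=2$ (the annulus) the relevant mapping class group is generated by the core Dehn twist, and only $\tau_{\mathrm{core}}$ yields $(S^3,\xi_{std})$: this is the positive Hopf band, a stabilization of $(D^2,\mathrm{Id})$. For $b=3$ (the pair of pants) there are no essential, non--boundary-parallel simple closed curves, so $\phi=\tau_{\delta_1}^{a_1}\tau_{\delta_2}^{a_2}\tau_{\delta_3}^{a_3}$ with each $a_i\ge 0$; the homological constraint together with tightness pins this down (up to relabeling and to absorbing boundary-parallel twists) to a very short explicit list, and for each entry a spanning arc joining two of the boundary circles is, after isotopy, carried to an arc at essential distance at most $1$ from itself, giving the destabilization.

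The substantive case is $b=4$, the four--holed sphere. Here every essential non--boundary-parallel curve bounds a two--holed subsurface, so there are only three such classes, $c_{12},c_{13},c_{14}$ (enclosing the indicated pairs of holes), together with the four boundary-parallel classes, and these are tied together by the lantern relation $\tau_{\delta_1}\tau_{\delta_2}\tau_{\delta_3}\tau_{\delta_4}=\tau_{c_{12}}\tau_{c_{13}}\tau_{c_{14}}$ (for a suitable labelling and ordering). First I would show that at most two of the three classes $c_{ij}$ can occur in the positive factorization and then that each occurring one occurs exactly once — using the homological constraint together with the fact that the binding must be a genuine fibered link in $S^3$ — thereby reducing $\phi$, up to conjugacy and cyclic reordering, to a short explicit list. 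For each monodromy on this list I would exhibit the destabilization directly: if no $c_{ij}$, or only one, appears, then a spanning arc joining two of the other holes meets only boundary-parallel twisting curves and one reduces to the $b\le 3$ analysis; if two of the $c_{ij}$ appear, the lantern relation — which is itself the algebraic shadow of a stabilization/destabilization — rewrites $\phi$ as a stabilization of a $b=3$ monodromy, or, equivalently, makes an arc separating the two "unused" pairs of holes nearly $\phi$--invariant, and one checks its essential translation distance is at most $1$.

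The main obstacle is the $b=4$ classification step: ruling out "exotic" positive factorizations, in particular controlling how boundary-parallel twists interact with the $c_{ij}$ twists under the lantern relation and confirming that the homological constraint forbids everything outside the short list. This is exactly the threshold at which the phenomenon becomes delicate — with five holes the homologically essential arc complex is already large enough to carry a monodromy of essential translation distance $\ge 2$, which is what produces the non-destabilizable examples of Theorem~\ref{thm1}, whereas with four or fewer holes one can always locate a homologically essential arc that $\phi$ barely moves. A secondary subtlety throughout is that the destabilizing arc must be \emph{homologically} essential, not merely essential: a naive short arc need not qualify, so one must verify homological essentiality (automatic for non-separating arcs, and forced in the separating case by $H_1(S^3)=0$), which is precisely where the homological refinement of the arc complex is needed.
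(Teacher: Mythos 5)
Your proposal shares the paper's starting point (Wendl's theorem gives a positive factorization $\phi=\tau_{\gamma_1}\cdots\tau_{\gamma_m}$) and the same general shape of a case analysis on the 4-holed sphere, but there are two genuine gaps.

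First, you never control the \emph{length} $m$ of the factorization. The condition that $\mathrm{id}-\phi_*$ be an isomorphism on $H_1(\Sigma)$ constrains the mapping class $\phi$, not the number of factors; indeed the lantern relation you invoke replaces a length-$4$ product by a length-$3$ product of the same class, so length is not a homological invariant. The paper pins down $m$ exactly by a four-dimensional count: the positive factorization is a Lefschetz fibration on a Stein filling of $(S^3,\xi_{std})$, Eliashberg's uniqueness theorem forces that filling to be $D^4$, and $\chi(D^4)=1$ gives $m=n$ (where the page has $n+1$ boundary circles). Without this, your case analysis on $\Sigma_{0,4}$ has no bound on the number of vanishing cycles. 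Second, and more seriously, you assert that on the 4-holed sphere there are "only three" non-boundary-parallel classes. That counts \emph{homology} classes (up to sign), not isotopy classes: there are infinitely many isotopy classes of essential separating curves in the class $[c_2]+[c_3]$, obtained for example by twisting the standard one around a curve enclosing $c_1,c_2$. The determinant/homology constraint only fixes homology classes, so after normalizing $\gamma_1,\gamma_2$ by a diffeomorphism the third curve $\gamma_3$ is still one of infinitely many options, and the "short explicit list" of monodromies you promise does not exist. This is exactly where the paper does its real work (its Case 4): it picks a test arc $\alpha$, notes that if $\gamma_3\cdot\alpha=1$ one destabilizes along $\alpha$, and otherwise it realizes $M_{(\Sigma,\phi)}$ as a page-framing $(-1)$-surgery on a knot in $S^2\times S^1$ and appeals to Gabai's Property~R theorem plus an incompressibility argument to show the result cannot be $S^3$. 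Nothing in your proposal — not the lantern relation, not $H_1$-constraints, not the "fibered link" remark — substitutes for this genuinely three-dimensional input. (A smaller point: your worry about homological essentiality of the stabilizing arc is a non-issue here, since on a planar page a stabilizing arc must join distinct boundary components, hence is automatically non-separating and homologically essential, as the paper observes.)
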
 

We leave the following question open.
\begin{question}
Are there open book decompositions supporting the standard tight contact structure on $S^3$ with page genus $1$ that cannot be desabilzied? What about higher genus?
\end{question}

One of the main tools we use in proving the above theorems is the homologically essential arc complex and essential translation distance. The arc complex was introduced by Harer in \cite{Harer82} to study the mapping class group but was first used to study open books in Saito and Yamamoto's \cite{SaitoYamamoto10}. Here we introduce the homologically essential arc complex and show its efficacy in studying stabilization of open books. We hope these new tools will find more applications in contact geometry. One can define the arc complex of a surface with boundary whose vertices are, more or less, isotopy classes of properly embedded arcs and two vertices have an edge if they are represented by disjoint arcs, see Subsection~\ref{ssec:arcc} for precise definitions. There is also the homologically essential arc complex defined in the same way, but only using homologically essential arcs. A diffeomorphism $\phi$ of the surface acts on both these complexes and we define the translation distance $\dist(\phi)$ and the essential translation distance $\dist_e(\phi)$ to be the minimal distance $\phi$ moves vertices in the respective complexes. We make the following observations.
\begin{lemma}\label{dist1}
Let $(\Sigma,\phi)$ be an open book decomposition. Then $\dist_e(\phi)=0$ implies that $M_{(\Sigma,\phi)}$ has an $S^2\times S^1$ summand. 
\end{lemma}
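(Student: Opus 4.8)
The plan is to exhibit a non-separating embedded $2$--sphere in $M=M_{(\Sigma,\phi)}$. Since a closed orientable $3$--manifold containing a non-separating sphere has an $S^2\times S^1$ connected summand (a regular neighborhood of such a sphere together with a transverse dual circle is $S^2\times S^1$ minus a ball), this would prove the lemma. First I would unpack the hypothesis: $\dist_e(\phi)=0$ means $\phi$ fixes a vertex of the homologically essential arc complex, so there is a properly embedded homologically essential arc $\gamma\subset\Sigma$ with $\phi(\gamma)$ isotopic to $\gamma$; applying $\phi^{-1}$, there is an isotopy of properly embedded arcs $\{\gamma_s\}_{s\in[0,1]}$ with $\gamma_0=\gamma$ and $\gamma_1=\phi^{-1}(\gamma)$. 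The feature of $\gamma$ I would use is that ``homologically essential'' means $\gamma$ is non-separating, i.e.\ $\Sigma\setminus\gamma$ is connected; the same then holds for every $\gamma_s$.

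Next I would build the sphere. Writing $M=(\Sigma\times[0,1])/\!\sim$ in the standard way, with binding $B$, solid-torus neighborhood $\nu(B)$, and mapping-torus piece $W=M\setminus\mathrm{int}\,\nu(B)$, set $F=\bigcup_{s}\gamma_s\times\{s\}\subset\Sigma\times[0,1]$. Because the monodromy is the identity near $\partial\Sigma$, the gluing $(x,1)\sim(\phi(x),0)$ matches the $s=1$ end of $F$ to its $s=0$ end by an orientation-preserving homeomorphism of the arc $\gamma$, so $F$ descends to a properly embedded \emph{annulus} $\hat F\subset W$ that meets each page in an arc isotopic to $\gamma$. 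Its two boundary circles $\ell_\pm\subset\partial\nu(B)$ each run once around the meridian (monodromy) direction and $m_\pm$ times around the longitudinal ($\partial\Sigma$) direction, where $m_\pm$ is the winding number in $\partial\Sigma$ of the track of the endpoint $\partial_\pm\gamma$ during the isotopy. Since dragging an endpoint once around its boundary component of $\partial\Sigma$ is a self-isotopy of the arc changing $m_+$ and $m_-$ independently by $\pm1$, I may modify $\{\gamma_s\}$ so that $m_+=m_-=0$. Then $\ell_\pm$ are meridians of $\nu(B)$, bounding disjoint disks $D_\pm\subset\nu(B)$, and $S:=\hat F\cup D_+\cup D_-$ is an embedded $2$--sphere in $M$.

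Finally I would check that $S$ is non-separating. Cutting $W$ along $\hat F$ produces $\bigcup_s(\Sigma\setminus\gamma_s)\times\{s\}$ reglued by the monodromy, which is connected since every $\Sigma\setminus\gamma_s$ is connected; and $\nu(B)\setminus(D_+\cup D_-)$ is a disjoint union of balls, each attached to $W\setminus\hat F$ along a non-empty portion of $\partial W$. Hence $M\setminus S$ is connected, so $S$ is non-separating and the lemma follows. The hard part will be the careful construction of $\hat F$ as an \emph{embedded} annulus with meridional boundary: checking that the end-gluing yields an annulus rather than a M\"obius band (this is where it matters that the monodromy fixes $\partial\Sigma$ pointwise, so cannot reverse $\gamma$), and arranging, via endpoint slides, that the boundary curves wind trivially around the binding, so that $\hat F$ can be capped off inside $\nu(B)$.
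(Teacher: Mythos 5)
Your argument is correct and rests on the same core idea as the paper's proof: a homologically essential arc fixed (up to isotopy) by $\phi$ sweeps out a non-separating sphere in $M_{(\Sigma,\phi)}$, and a non-separating embedded sphere in a closed $3$--manifold gives an $S^2\times S^1$ summand. The paper shortcuts the construction you carry out by hand: since $\phi(\gamma)$ is isotopic to $\gamma$ rel endpoints and the isotopy extends to an ambient isotopy supported away from $\partial\Sigma$, one may assume outright that $\phi$ fixes $\gamma$ set-wise, so the sphere is simply $\gamma\times[0,1]$ and the winding-number and M\"obius-band worries you flag never arise. (In fact your own hypotheses already kill them: vertices of the arc complex are isotopy classes with endpoints pinned at the distinguished points, so the track of each endpoint through the isotopy is constant, giving $m_\pm=0$ automatically.) The paper then sees non-separation by exhibiting the Poincar\'e dual curve $\gamma'$ meeting $\gamma$ (hence $S$) once and splitting along a neighborhood of $S\cup\gamma'$, which is exactly the standard ``non-separating sphere $\Rightarrow$ $S^2\times S^1$ summand'' fact that you cite, just unpacked. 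So same route; your version is more explicit about the annulus-in-the-mapping-torus picture, the paper's is more economical via the upfront isotopy of $\phi$.
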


\noindent
We note the examples from \cite{Wand??} imply that the other implication is not true.

\begin{theorem}\label{dist2}
An open book decomposition for a tight contact structure on any manifold without an $S^2\times S^1$ summand destabilizes if and only if its essential translation distance is 1.
\end{theorem}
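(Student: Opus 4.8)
The plan is to establish the two implications separately; the forward direction is a short computation using Lemma~\ref{dist1}, and the reverse direction carries the real content.

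\emph{Destabilizes $\Rightarrow \dist_e(\phi)=1$.} Suppose $(\Sigma,\phi)$ is a positive stabilization of $(\Sigma_0,\phi_0)$, so $\Sigma=\Sigma_0\cup N(\gamma)$ for a $1$--handle with cocore $\gamma$, and $\phi=\phi_0\circ\tau_c$ with $\phi_0$ supported in $\Sigma_0$ and $c$ meeting $\gamma$ exactly once. Since $\Sigma\setminus N(\gamma)$ retracts onto $\Sigma_0$, the arc $\gamma$ is non-separating, hence homologically essential. As $\phi_0$ is the identity on $N(\gamma)\supset\gamma$, we have $\phi(\gamma)=\phi_0(\tau_c(\gamma))$; since $\tau_c(\gamma)$ is just $\gamma$ with a finger dragged once along $c$ it can be isotoped off $\gamma$, and applying the homeomorphism $\phi_0$ (which fixes $\gamma$) keeps it disjoint from $\gamma$. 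Thus $\dist(\gamma,\phi(\gamma))\le 1$ and $\dist_e(\phi)\le 1$. By Lemma~\ref{dist1}, $\dist_e(\phi)=0$ would force $M$ to have an $S^2\times S^1$ summand, which is excluded; hence $\dist_e(\phi)=1$.

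\emph{$\dist_e(\phi)=1 \Rightarrow$ destabilizes.} Pick a homologically essential arc $\gamma$ realizing the distance, so $\phi(\gamma)$ is isotopic to an arc disjoint from $\gamma$ but not isotopic to $\gamma$; write $\gamma'=\phi(\gamma)$, and note $\gamma,\gamma'$ share both endpoints since $\phi$ fixes $\partial\Sigma$ pointwise. As $\gamma$ is non-separating, regard it as the cocore of a $1$--handle in a decomposition $\Sigma=\Sigma_0\cup N(\gamma)$. The key step is to produce a simple closed curve $c$ with $i(c,\gamma)=1$ and $\tau_c^{\pm1}(\gamma)\simeq\gamma'$. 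Disjointness puts $\gamma'$ on one of the two local sides of $\gamma$ at each shared endpoint; when these sides are opposite, the part of $\gamma'$ outside $N(\gamma)$ can be isotoped into $\Sigma_0$ to an embedded arc $\beta$ joining the two feet of the handle, and then $c:=(\text{core of }N(\gamma))\cup\beta$ meets $\gamma$ once and satisfies $\tau_c^{\pm1}(\gamma)\simeq\gamma'$. Granting $c$, put $\psi:=\tau_c^{\mp1}\circ\phi$; then $\psi(\gamma)\simeq\gamma$, so by the change-of-coordinates principle $\psi$ is isotopic rel $\partial\Sigma$ to a homeomorphism fixing $N(\gamma)$ pointwise. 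Hence $\phi\simeq\tau_c^{\pm1}\circ\psi$ with $\psi$ supported in $\Sigma_0$ and $c$ meeting the cocore of $N(\gamma)$ once, which exhibits $(\Sigma,\phi)$ as a stabilization of $(\Sigma_0,\psi|_{\Sigma_0})$. It must be a \emph{positive} stabilization: a negative stabilization of any open book supports an overtwisted contact structure, whereas $(\Sigma,\phi)$ supports the tight $\xi$ and the contact structure supported by an open book is unique. Therefore $(\Sigma,\phi)$ destabilizes.

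\emph{Main obstacle.} The crux is the key step in the reverse direction: converting the soft information that $\phi(\gamma)$ is disjoint from but not isotopic to $\gamma$ into the statement that $\phi(\gamma)$ is a single Dehn twist of $\gamma$ about a curve meeting it once. The delicate case is when $\gamma$ and $\phi(\gamma)$ lie on the \emph{same} local side of $\gamma$ at both endpoints, so that they cobound an embedded subsurface $R$: if $R$ is a disk then $\gamma\simeq\phi(\gamma)$, which is excluded, and one must show that the positive-genus case cannot occur under the present hypotheses — I expect it to either produce an $S^2\times S^1$ summand of $M$ or to force $\dist_e(\phi)=0$ via a different essential arc, both ruled out through Lemma~\ref{dist1} and the assumption on $M$. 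Pinning down this dichotomy — rather than the change-of-coordinates step or the overtwistedness of negative stabilizations, which are standard — is where the work concentrates.
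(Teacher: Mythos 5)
Your forward direction matches the paper's argument in substance, and your observation that the stabilization must be positive (because negative stabilizations yield overtwisted contact structures) is a perfectly good replacement for the paper's phrasing. The real problem is in the reverse direction, and you have correctly located it yourself: the ``same local side at both endpoints'' case, where $\gamma$ and $\phi(\gamma)$ cobound a non-disk subsurface $R$. You leave this case as a conjecture (``I expect it to either produce an $S^2\times S^1$ summand or force $\dist_e(\phi)=0$''), but this is precisely where the paper's argument does its work, and it does so by a different mechanism.

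The paper actually proves a slightly more general statement: the equivalence holds for any \emph{right-veering} open book whose associated manifold has no $S^2\times S^1$ summand; Theorem~\ref{dist2} is then a corollary because, by Honda--Kazez--Mati\'c \cite{HondaKazezMatic07}, every open book supporting a tight contact structure is right-veering. Right-veeringness is the tool that kills your delicate case: if $\phi(\gamma)$ is not isotopic to $\gamma$, right-veeringness forces $\phi(\gamma)$ to lie strictly to the right of $\gamma$ at \emph{each} endpoint. A short local check (e.g.\ in the cut-open picture along $\gamma$) shows that ``to the right at both endpoints'' is exactly the configuration in which, after cutting along $\gamma$, the arc $\phi(\gamma)$ runs from one copy of $\gamma$ to the other; equivalently, the closed curve $\alpha$ obtained by smoothing $\gamma\cup\phi(\gamma)$ at the shared endpoints and pushing into the interior can be chosen to meet each of $\gamma$ and $\phi(\gamma)$ transversally once. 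Then $\tau_\alpha^{-1}\circ\phi$ fixes $\gamma$, and one cuts along $\gamma$ to exhibit the destabilization. The ``same side'' case is thus excluded at the outset, so there is nothing to derive from it; I am not convinced your proposed dichotomy ($S^2\times S^1$ summand vs.\ $\dist_e(\phi)=0$) is even available without the right-veering input, since cobounding a positive-genus $R$ with $\phi(\gamma)$ does not obviously produce a sphere or a fixed essential arc. You should invoke tight $\Rightarrow$ right-veering at the start of the reverse direction and use it both to build the twisting curve $\alpha$ and to see immediately that the twist is right-handed, which then yields the positive stabilization directly.
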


Generalizing previous conjectures, now known to be false, we can ask the following question. 
\begin{question}
Is there some integer $n$ larger than 0 such that 
if an open book decomposition is right veering and its monodromy has essential translation distance greater than $n$, then the supported contact structure is tight? The Heegaard-Floer contact invariant is non-zero?
\end{question}
We end with one last question.
\begin{question}
Is there a relation between fractional Dehn twist coefficients of a monodromy map and its (essential) translation distance?
\end{question}

{\bf Acknowledgements}:  We thank Meredith Casey and Dan Margalit for useful conversations during the preparation of this paper. The first author thanks the Simons Center for their hospitality during the completion of the work in this paper. The first author was partially supported by NSF grant DMS-0804820 and DMS-1309073.  The second author was partially supported by NSFC grant 11001171 and the China Scholarship Council grant 201208310626.  This work was carried out while the second author was visiting Georgia Institute of Technology and he would like to thank for their hospitality.

\section{Background and preliminary notions}

In this section we begin by recalling the definition of an open book decomposition, its associated contact structure and other relevant notions. In the following subsection we define the arc complex and the homologically essential arc complex, define a notion of distance in these complexes and in the last subsection give a simple way one can try to bound the distance.

\subsection{Open book decompositions and contact structures}\label{ssec:obd}
Recall that given a surface $\Sigma$ with boundary and a diffeomorphism  $\phi\co \Sigma\to\Sigma$ that fixes the boundary, we can construct a 3--manifold $M_{(\Sigma,\phi)}$ by collapsing each circle $x\times [0,1]/\sim$ in the boundary of the mapping torus
\[
T_\phi:\Sigma\times[0,1]/(x,1)\sim(\phi(x),0)
\]
to a point. We call the image of $\Sigma\times \{t\}$ a \dfn{page} of the open book and the boundary of this surface is called the \dfn{binding} of the open book. The diffeomorphism $\phi$ is called the \dfn{monodromy} of the open book. If $M$ is a 3--manifold diffeomorphic to $M_{(\Sigma,\phi)}$ then we say that $(\Sigma, \phi)$ is an \dfn{open book decomposition} for $M$. See \cite{Etnyre06} for more details. 

A \dfn{positive stabilization} of an open book decomposition $(\Sigma, \phi)$ of a manifold $M$ is the open book decomposition $(\Sigma', \phi')$ obtained as follows: let $\Sigma'$ be $\Sigma$ with a 1--handle attached and let $\alpha$ be a curve in $\Sigma'$ that (transversely) intersects the co-core of the new 1--handle exactly once. Set $\phi'=\phi\circ \tau_\alpha$, where $\tau_\alpha$ is the right handed Dehn twist along $\alpha$. The \dfn{negative stabilization} of $(\Sigma,\phi)$ is defined in the same way except one set $\phi'=\phi\circ \tau^{-1}_\alpha$. One may easily check that $M_{(\Sigma',\phi')}$ is diffeomorphic to $M_{(\Sigma,\phi)}$ for both the positive and negative stabilization. 

It is well known \cite{Giroux02, ThurstonWinkelnkemper75} that to an open book decomposition of $M$ there is a naturally associated contact structure $\xi_{(\Sigma,\phi)}$ and we say that the open book $(\Sigma, \phi)$ \dfn{supports} this contact structure. Moreover positive stabilization does not change the contact structure where as negative stabilization does. 

In \cite{HondaKazezMatic07} the notion of an open book decomposition being right veering was defined as a way of trying to better understand whether or not the associated contact structure was tight or overtwisted.  We recall the definition here. Let $\gamma$ and $\gamma'$ be two arcs properly embedded in an oriented surface $\Sigma$ such that they have a common endpoint $x$. Isotope the curves (rel boundary) so that they intersect minimally. We say that $\gamma$ is \dfn{to the right} of $\gamma'$ at $x$ if the tangent vector to $\gamma$ at $x$ followed by the tangent vector of $\gamma'$ at $x$ forms an oriented basis for $T_x\Sigma$, we also say that $\gamma'$ is to the left of $\gamma$ at $x$. A diffeomorphism $\phi$ of $\Sigma$ that fixes the boundary is called \dfn{right veering} if for every arc $\gamma$ that is properly embedded in $\Sigma$, the image of $\gamma$ is either isotopic to $\gamma$ or is to the right of $\gamma$ at each endpoint. The fundamental observation of Honda, Kazez and Mati\'c in \cite{HondaKazezMatic07} was that if $\xi$ is a tight contact structure then all of the open books that support it will be right veering. 

\subsection{The arc complex}\label{ssec:arcc}
Given a surface $\Sigma$ with non-empty boundary. Choose a distinguished point on each boundary component. We will consider properly embedded arcs with endpoints on a subset of these distinguished points. We call a properly embedded arc \dfn{essential} if it is not isotopic into the boundary of $\Sigma$. 

\smallskip
\noindent
{\em Important convention:} When discussing embedded arcs we only require that they are imbedded on their interior. That is both the endpoints can be mapped to the same place. 
\smallskip

We define the \dfn{arc complex} of $\Sigma$, denoted $\mathcal{A}(\Sigma)$, to be the complex with vertices the isotopy classes of properly embedded essential arcs with endpoints at the chosen distinguished points on the boundary. There will be an edge between two {\em distinct} vertices if the interiors of the arcs are disjoint after they have been isotoped to intersect minimally. Finally $k$ distinct vertices will bound a $k$-simplex if they can be isotoped to have disjoint interior. 

We note that this definition is essentially equivalent to Harer's definition in \cite{Harer86}, though he allows for marked points on the interior, and is similar to Saito and Yamamoto's definition in \cite{SaitoYamamoto10}. We did not check if our definition is equivalent to Saito and Yamamoto's, but suspect that this is the case. 
We note that even though Saito and Yamamoto were also studying open book decompositions we think our definition that requires arcs to have fixed endpoints  is more convenient with discussing various notions such as right veering that have become important in contact geometry. 

There is a natural subcomplex $\mathcal{A}_e(\Sigma)$ of $\mathcal{A}(\Sigma)$ who vertices consist of homologically essential arcs and the higher cells are defined as above. We call this the \dfn{homologically essential arc complex}. 

We define a distance function on the 0-skeleton of $\mathcal{A}(\Sigma)$ 
\[
d\co\mathcal{A}^0(\Sigma)\times \mathcal{A}^0(\Sigma)\to \R
\]
by declaring each edge in $\mathcal{A}$ to have unit length. In particular $d([\gamma],[\gamma'])$ is $k$ if $k$ is the smallest integer for which there is a sequence of distinct vertices $\gamma_0,\ldots, \gamma_k$ such that $\gamma=\gamma_0$, $\gamma'=\gamma_k$ and $\gamma_i$ is connected to $\gamma_{i+1}$ by an edge for each $i=0,\ldots, k-1$. It is useful to point out that while Saito and Yamamoto's definition of the arc complex is different from ours, it is clear that the vertices are in one-to-one correspondence and under this correspondence the distance function we just defined is the same as theirs. Thus the notion of translation distance below is also the same as theirs. 

Similarly we define a distance function on the 0-skeleton of $\mathcal{A}_e(\Sigma)$ 
\[
d_e\co\mathcal{A}_e^0(\Sigma)\times \mathcal{A}_e^0(\Sigma)\to \R
\]
as above. 
\subsection{A simple bound on distance}

Computing  distance can be very difficult, so in this subsection we indicate how to get bounds on certain distances. 
Given a surface $\Sigma$ with more than one boundary component we say $\Sigma'$ is obtained from it by \dfn{capping off a boundary component} if $\Sigma'$ is obtained from $\Sigma$ by gluing a disk to one of its boundary components. 
\begin{lemma}\label{lem:b1}
Let $\Sigma$ be a surface with more than one boundary component. If $\Sigma'$ is obtained from $\Sigma$ by capping off a boundary component and the distance between $\gamma$ and $\gamma'$ in $\mathcal{A}(\Sigma')$ is greater than 1 then the distance in $\mathcal{A}(\Sigma)$ is also greater than 1. The same statement holds in the homologically essential arc complex.  
\end{lemma}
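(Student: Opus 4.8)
The plan is to argue by contrapositive: suppose the distance between $\gamma$ and $\gamma'$ in $\mathcal{A}(\Sigma)$ is at most $1$, and show the distance in $\mathcal{A}(\Sigma')$ is also at most $1$. There is an obvious map on vertices induced by the inclusion $\Sigma \hookrightarrow \Sigma'$: a properly embedded arc in $\Sigma$ is still a properly embedded arc in $\Sigma'$ (its endpoints lie on distinguished points of boundary components that survive in $\Sigma'$, since we cap off only one component and the arcs in question are anchored elsewhere — here I should be slightly careful and note that if an arc had an endpoint on the capped-off component we would need to slide it, but for the arcs $\gamma, \gamma'$ under consideration this is not an issue, or can be arranged). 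First I would verify that this inclusion-induced correspondence sends essential arcs to essential arcs and homologically essential arcs to homologically essential arcs; the latter is immediate from naturality of $H_1$ under inclusion, and the former is the content discussed below.

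The core observation is monotonicity of disjointness under inclusion. If $\gamma$ and $\gamma'$ are disjoint (after minimal isotopy) in $\Sigma$, they are certainly disjoint as arcs in $\Sigma' \supset \Sigma$. So if $d(\gamma,\gamma') \le 1$ in $\mathcal{A}(\Sigma)$ — meaning either they are isotopic in $\Sigma$ or connected by an edge — then in $\Sigma'$ they are either isotopic (isotopies in $\Sigma$ push forward to isotopies in $\Sigma'$) or still realized by disjoint arcs, hence at distance at most $1$ in $\mathcal{A}(\Sigma')$. The same reasoning applies verbatim in the homologically essential complex, using the preceding paragraph's remark that inclusion preserves homological essentiality.

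The one genuine subtlety — and the step I expect to be the main obstacle — is that an arc which is essential in $\Sigma$ might become inessential in $\Sigma'$, i.e.\ isotopic into $\partial \Sigma'$ once the disk is glned in. This happens precisely when the arc cuts off, together with part of $\partial\Sigma$, an annular region one of whose boundary circles was the capped-off component. In that case the arc is not a vertex of $\mathcal{A}(\Sigma')$ at all and the argument above does not literally make sense. I would handle this by observing that such a degeneration can only help: if $\gamma$ becomes inessential in $\Sigma'$ it is isotopic in $\Sigma'$ to a sub-arc of $\partial\Sigma'$, which forces $\gamma$ to have been isotopic in $\Sigma$ to an arc in the capped-off boundary component union the rest of $\partial\Sigma$ — but then $\gamma$ was either inessential in $\Sigma$ (contradicting that it is a vertex) or, more to the point, the hypothesis ``distance greater than $1$ in $\mathcal{A}(\Sigma')$'' already presupposes $\gamma,\gamma'$ are honest vertices there, so this case simply does not arise under the hypotheses as stated. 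Thus the contrapositive goes through, and the essential-arc version needs the extra check that capping off a boundary component cannot turn a homologically essential arc into a homologically trivial one, which again follows from functoriality of homology together with the fact that the capped-off boundary circle was null-homologous to begin with in any surface with boundary. Writing this last point carefully is the only place requiring attention; everything else is the formal monotonicity of the simplicial structure under inclusion of surfaces.
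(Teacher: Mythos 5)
Your proof is correct and takes essentially the same route as the paper: argue the contrapositive, noting that both isotopy and disjointness of arcs in $\Sigma$ persist when $\Sigma$ is included into $\Sigma'$, so distance $\leq 1$ in $\mathcal{A}(\Sigma)$ forces distance $\leq 1$ in $\mathcal{A}(\Sigma')$. One side remark is false as stated --- a boundary circle of a surface with nonempty boundary is generally \emph{not} null-homologous (in a pair of pants $[c_0]=-[c_1]-[c_2]\neq 0$ in $H_1$) --- but this claim carries no weight in the argument, since, as you yourself note, the hypothesis already guarantees $\gamma$ and $\gamma'$ are honest vertices of $\mathcal{A}_e(\Sigma')$, so the contrapositive goes through exactly as in the paper.
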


\begin{proof}
Given arcs $\gamma$ and $\gamma'$ in $\Sigma$ (without endpoints on the capped off boundary  component) that have distance 1 or 0 as arcs in $\Sigma$ we see that distance 0 implies they are isotopic and hence they would be isotopic in $\Sigma'$ too (and hence distance 0 there). Distance 1 implies they are not isotopic but have disjoint interiors which clearly implies they have distance 0 or 1 in $\Sigma'$. Thus if the distance in $\Sigma'$ is greater than 1, then it must also be great than 1 in $\Sigma$. 
\end{proof}


\section{Translation distance and destabilization}
In this section we will define the translation distance and essential translation distance of a surface diffeomorphism and show how the later relates to destabilization of open book decompositions. 

\subsection{Translation distance}

Given a diffeomorphism $\phi$ of a surface with boundary $\Sigma$ (that fixes the boundary) we define the \dfn{translation distance} to be the minimal distance that $\phi$ moves a vertex in $\mathcal{A}(\Sigma)$:
\[
\dist(\phi)=\min\{d(\alpha, \phi(\alpha)):\alpha \in \mathcal{A}^0(\Sigma)\}.
\]
This notion was originally defined in \cite{SaitoYamamoto10}  using a slightly different notion for the arc complex, but as commented in Subsection~\ref{ssec:arcc} the notion of distance, and hence translation distance, is the same. We similarly define the \dfn{essential translation distance}:
\[
\dist_e(\phi)=\min\{d_e(\alpha, \phi(\alpha)):\alpha \in \mathcal{A}_e^0(\Sigma)\}.
\]

We will see below that the essential translation distance has a closer connection to contact geometric properties and for that reason we restrict attention to it. We gave the definition of translation distance largely to tie this work with that of Saito and Yamamoto and to contrast it with essential translation distance. For now we observe in the next two lemmas one simple difference between the two different notions of translation distance, the first lemma is just Lemma~\ref{dist1} from the introduction. 
\begin{lemma}\label{lem:de0}
Let $(\Sigma,\phi)$ be an open book decomposition. Then $\dist_e(\phi)=0$ implies that $M_{(\Sigma,\phi)}$ has an $S^2\times S^1$ summand.
\end{lemma}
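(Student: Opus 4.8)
The plan is to use the hypothesis to build an embedded non-separating $2$-sphere in $M_{(\Sigma,\phi)}$ and then appeal to the classical fact that a closed orientable $3$-manifold containing a non-separating embedded $2$-sphere has an $S^2\times S^1$ connected summand.

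\emph{Step 1: unwinding the hypothesis and normalizing $\phi$.} The equality $\dist_e(\phi)=0$ provides a homologically essential, properly embedded arc $\alpha$ with $d_e(\alpha,\phi(\alpha))=0$, that is, $\phi(\alpha)$ is isotopic to $\alpha$. Replacing $\phi$ by an isotopic diffeomorphism rel $\partial\Sigma$ — which changes neither $M_{(\Sigma,\phi)}$ nor the supported contact structure — we may assume $\phi(\alpha)=\alpha$ as a set. Since $\phi$ fixes $\partial\Sigma$ it fixes the endpoints of $\alpha$, and since $\phi$ is orientation preserving the restriction $\phi|_\alpha$ is isotopic to the identity rel endpoints; after one further such isotopy we may assume $\phi|_\alpha=\mathrm{id}$. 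There is one bookkeeping subtlety: our convention allows the two endpoints of $\alpha$ to sit at the same distinguished point, in which case $\alpha$ is not genuinely embedded. If this happens we first slide one endpoint a small distance along $\partial\Sigma$, inside a collar on which $\phi$ is the identity, producing a genuinely embedded arc $\alpha'$ with distinct endpoints that is still homologically essential and still fixed up to isotopy by $\phi$; we then rename $\alpha'$ to $\alpha$.

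\emph{Step 2: the sphere.} In the mapping torus $T_\phi$ the image of $\alpha\times[0,1]$ is, because $\phi|_\alpha=\mathrm{id}$, a properly embedded annulus $F=\alpha\times S^1$ whose two boundary circles have the form $\{x\}\times S^1$ with $x\in\partial\Sigma$. But these are precisely the circles collapsed to points in passing from $T_\phi$ to $M_{(\Sigma,\phi)}$, and collapsing each boundary circle of an annulus to a point yields a $2$-sphere; inspecting the standard local model near the binding shows that near each of the two collapsed points the image is a meridian disk of the solid-torus neighborhood of the binding, hence smooth. Thus the image $\overline F\subset M_{(\Sigma,\phi)}$ is a smoothly embedded $2$-sphere.

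\emph{Step 3: non-separating, and conclusion.} Because $\alpha$ is homologically essential there is a simple closed curve $c\subset\Sigma$ meeting $\alpha$ transversally in a single point (cut $\Sigma$ along $\alpha$ to get a connected surface, join the two copies of $\alpha$ by an embedded arc, and re-glue). Then the loop $c\times\{1/2\}\subset\Sigma\times\{1/2\}\subset M_{(\Sigma,\phi)}$ meets $\overline F$ transversally in the single point $(c\cap\alpha,1/2)$, so $[\overline F]\neq 0$ in $H_2(M_{(\Sigma,\phi)};\mathbb{Z}/2)$ and $\overline F$ does not separate $M_{(\Sigma,\phi)}$; the classical fact quoted above then finishes the proof. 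I expect the only points genuinely needing care to be those flagged in Step 1 — that replacing $\phi$ by an isotopic map is harmless, and the small slide needed when the two endpoints of $\alpha$ coincide — together with the routine verification in Step 2 that $\overline F$ is smoothly embedded where the binding is collapsed; the homological input in Step 3 is immediate.
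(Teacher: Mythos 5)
Your argument is essentially identical to the paper's: both build the $2$-sphere $\overline F$ from $\alpha\times[0,1]$ after normalizing $\phi$ to fix the arc, and both exhibit a dual simple closed curve on a page meeting $\overline F$ once to see the sphere is non-separating and thus yields an $S^2\times S^1$ summand. The paper constructs the connected-sum decomposition explicitly (capping off a neighborhood of $S\cup\gamma'$ and its complement) rather than citing the classical fact, and it elides the bookkeeping you carefully flag in Step 1 and the smoothness check in Step 2, but these are presentation differences rather than a different route.
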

\begin{proof}
Suppose that $\dist_e(\phi)=0$. In this case there is an essential arc $\gamma$ in $\Sigma$ such that $d_e(\gamma,\phi(\gamma))=0.$ That is $\phi$ can be assumed to fix $\gamma$. Since $\gamma$ is essential there is an embedded closed curve $\gamma'$ in $\Sigma$ that intersects $\gamma$ exactly once and the intersection is transverse. Notice that $\gamma\times [0,1]$ in $M_{(\Sigma,\phi)}$ is a 2--sphere $S$ (see Subsection~\ref{ssec:obd} to recall the definition of $M_{(\Sigma,\phi)}$). We can think of $\gamma'$ as sitting on one of the pages of the open book and this gives a simple closed curve in $M_{(\Sigma,\phi)}$ that intersects $S$ exactly once. Let $N$ be a neighborhood of $S\cup \gamma'$ in $M_{(\Sigma,\phi)}$ and $N'$ be the closure of its complement. One can glue a closed 3--ball to $N$ and $N'$ to obtain closed 3--manifolds $M$ and $M'$. It is now easy to see that $M_{(\Sigma,\phi)}=M\# M'$ and that $M=S^2\times S^1$. 
\end{proof}
\begin{lemma}
Let $(\Sigma,\phi)$ be an open book decomposition. Then $\dist(\phi)=0$ implies that either 
\begin{enumerate}
\item $M_{(\Sigma,\phi)}$  has an $S^2\times S^1$,
\item $M_{(\Sigma,\phi)}$ can be written as a non-trivial connected sum of two manifolds, or
\item $\Sigma$ can be written as the boundary sum of two surface $\Sigma_1\natural \Sigma_2$, $\phi$ can be isotoped to preserve $\Sigma_1$ and $\Sigma_2$ and $(\Sigma_i,\phi_i)$ is an open book for $S^3$ for $i=1$ or $2$, where $\phi_i=\phi|_{\Sigma_i}$. 
\end{enumerate}
\end{lemma}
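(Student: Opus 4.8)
\medskip
\noindent\textbf{Proof proposal.} The plan is to run the argument from the proof of Lemma~\ref{lem:de0}, but to split into cases according to whether the $\phi$--invariant arc provided by the hypothesis is homologically essential. By definition, $\dist(\phi)=0$ yields an essential arc $\gamma\in\mathcal{A}^0(\Sigma)$ with $d(\gamma,\phi(\gamma))=0$; that is, $\phi(\gamma)$ is isotopic to $\gamma$ rel endpoints, so after an isotopy of $\phi$ we may assume $\phi$ fixes $\gamma$ pointwise. The linchpin is the elementary fact that a properly embedded arc is homologically essential (nonzero in $H_1(\Sigma,\partial\Sigma)$) if and only if it is non-separating, so $\gamma$ is either a vertex of $\mathcal{A}_e(\Sigma)$ or it separates $\Sigma$.

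First I would dispose of the homologically essential case: there $\gamma\in\mathcal{A}_e^0(\Sigma)$ and $d_e(\gamma,\phi(\gamma))=0$, so $\dist_e(\phi)=0$ and Lemma~\ref{lem:de0} gives conclusion~(1). In the remaining case $\gamma$ separates $\Sigma$; let $\Sigma_1,\Sigma_2$ be the closures of the two components of $\Sigma\setminus\gamma$ (corners smoothed). Then $\Sigma=\Sigma_1\natural\Sigma_2$ is a boundary connected sum with $\gamma$ the co-core of the connecting band, and neither $\Sigma_i$ is a disk because $\gamma$ is essential. Since $\phi$ fixes both $\gamma$ and $\partial\Sigma$ pointwise and is orientation preserving, it cannot interchange the two sides of $\gamma$ and hence preserves each $\Sigma_i$; setting $\phi_i=\phi|_{\Sigma_i}$, a diffeomorphism fixing $\partial\Sigma_i$, we may arrange (after a further isotopy supported near $\gamma$) that $(\Sigma,\phi)=(\Sigma_1\natural\Sigma_2,\phi_1\natural\phi_2)$. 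Now I would invoke the standard fact that the boundary sum of two open books whose monodromies are supported in the two summands presents the connected sum of the supported $3$--manifolds, giving $M_{(\Sigma,\phi)}=M_{(\Sigma_1,\phi_1)}\#M_{(\Sigma_2,\phi_2)}$. If neither summand is $S^3$ this is conclusion~(2); if one of them is $S^3$ then that $(\Sigma_i,\phi_i)$ is an open book for $S^3$, which is conclusion~(3). Since every essential arc falls into one of the two cases, this covers all of $\dist(\phi)=0$.

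The step I expect to cause the most trouble is the degenerate configuration permitted by the paper's convention in which the two endpoints of $\gamma$ coincide at a single distinguished point, so that $\gamma$ is really an embedded circle meeting $\partial\Sigma$ exactly once. There one must check that cutting along $\gamma$ still exhibits $\Sigma$ as an honest boundary connected sum (equivalently, that the two sectors at the puncture distribute correctly between $\Sigma_1$ and $\Sigma_2$), that in the non-separating sub-case the surface $\gamma\times[0,1]$ still caps off to an embedded $2$--sphere meeting a dual curve once exactly as in Lemma~\ref{lem:de0}, and that the orientation argument ruling out an interchange of $\Sigma_1$ and $\Sigma_2$ still applies. The only other items deserving a remark are the precise form of the ``connected sum of open books'' statement and the observation that when $\Sigma$ is a disk there are no essential arcs, so the hypothesis $\dist(\phi)=0$ is vacuous and there is nothing to prove.
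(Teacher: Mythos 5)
Your proof is correct, and it is precisely the adaptation of the proof of Lemma~\ref{lem:de0} that the authors had in mind when they wrote that the proof is ``very similar to the proof of the previous lemma and is left to the reader.'' The dichotomy you use (homologically essential $\Leftrightarrow$ non-separating, so the fixed arc $\gamma$ either yields $\dist_e(\phi)=0$ and hence conclusion (1) via Lemma~\ref{lem:de0}, or $\gamma$ separates and $\Sigma$ is a boundary sum) is the right organizing step, and the separating case does deliver (2) or (3) exactly as you describe. Two small remarks: first, it is worth noting that a separating arc necessarily has both endpoints on the same boundary component (an arc joining two distinct boundary components maps to a nonzero class under $\partial\co H_1(\Sigma,\partial\Sigma)\to \tilde H_0(\partial\Sigma)$), which both confirms that the degenerate ``loop'' configuration you worry about is the typical one in case~2 and makes it clear that each $\Sigma_i$ meets $\partial\Sigma$, so $\phi$ cannot swap them; second, rather than invoking the boundary-sum-of-open-books fact as a black box, you can argue exactly as in Lemma~\ref{lem:de0}: the sphere $S=\gamma\times[0,1]$ (capped off) now separates $M_{(\Sigma,\phi)}$, and filling the two sides with balls gives the connected-sum decomposition $M_{(\Sigma_1,\phi_1)}\# M_{(\Sigma_2,\phi_2)}$ directly, which is a closer parallel to the authors' template.
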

The proof of this lemma is very similar to the proof of the previous lemma and is left to the reader. 
So we see from these two lemmas that the essential translation distance can tell us about the manifold $M_{(\Sigma,\phi)}$ whereas the translation distance can only tell us about the open book decomposition. 

\subsection{Destabilizing open book decompositions}

The connection between the essential translation distance of diffeomorphism and destabilization of an open book decomposition is given in the following theorem. 
\begin{theorem}
Let $(\Sigma,\phi)$ be an open book decomposition that is right veering. If the 3--manifold $M_{(\Sigma,\phi)}$ associated to the open book does not have an $S^2\times S^1$ summand, then $(\Sigma,\phi)$ (positively) destabilizes if and only if $\dist_e(\phi)= 1$. 
\end{theorem}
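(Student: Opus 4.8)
The plan is to prove each implication of the biconditional separately, with the geometric content concentrated in the ``destabilizes $\Rightarrow \dist_e(\phi)=1$'' direction and the harder ``$\dist_e(\phi)=1 \Rightarrow$ destabilizes'' direction.

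For the easy direction, suppose $(\Sigma,\phi)$ positively destabilizes. Then $(\Sigma,\phi)$ is a positive stabilization of some open book $(\Sigma_0,\phi_0)$, so there is a one--handle $H$ attached to $\Sigma_0$ to form $\Sigma$ and a curve $\alpha$ intersecting the co--core $\gamma$ of $H$ exactly once with $\phi=\phi_0\circ\tau_\alpha$. I would take $\gamma$ itself as the candidate arc: it is homologically essential (it is dual to the core of the handle), and $\tau_\alpha(\gamma)$ intersects $\gamma$ once and is not isotopic to $\gamma$, while $\phi_0$ is supported away from $H$ and hence fixes $\gamma$; so $\phi(\gamma)=\phi_0(\tau_\alpha(\gamma))$ is disjoint in its interior from $\gamma$ and not isotopic to it. Hence $d_e(\gamma,\phi(\gamma))=1$, so $\dist_e(\phi)\le 1$. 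Since $M_{(\Sigma,\phi)}$ has no $S^2\times S^1$ summand, Lemma~\ref{lem:de0} rules out $\dist_e(\phi)=0$, giving $\dist_e(\phi)=1$.

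For the hard direction, suppose $\dist_e(\phi)=1$, so there is a homologically essential arc $\gamma$ with $d_e(\gamma,\phi(\gamma))=1$: after isotopy $\gamma$ and $\phi(\gamma)$ have disjoint interiors and are not isotopic. The idea is to use $\gamma$ to locate a handle to de--plumb. Cut $\Sigma$ along $\gamma$; because $\gamma$ is homologically essential this does not disconnect $\Sigma$ and produces a surface $\Sigma'$ of one smaller ``complexity'' with two copies $\gamma_+,\gamma_-$ of $\gamma$ in its boundary, exhibiting $\Sigma=\Sigma'\cup H$ where $H$ is a one--handle with co--core $\gamma$. To recognize $(\Sigma,\phi)$ as a positive stabilization I need the monodromy to restrict, after isotopy, as $\phi=\phi_0\circ\tau_\alpha$ for some $\phi_0$ supported in $\Sigma'$ and some $\alpha$ hitting $\gamma$ once. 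Here is where right--veering enters: since $\phi$ is right veering, $\phi(\gamma)$ lies (weakly) to the right of $\gamma$ at each of its two endpoints, and being non--isotopic it lies strictly to the right at some endpoint; combined with disjointness of interiors this forces the ``band'' swept between $\gamma$ and $\phi(\gamma)$ to look exactly like the effect of a single positive Dehn twist $\tau_\alpha$ along a curve $\alpha$ dual to $\gamma$, with the remaining monodromy isotopic off of $H$. Writing $\phi\circ\tau_\alpha^{-1}=\phi_0$ and checking $\phi_0$ can be isotoped to be supported in $\Sigma'$ then exhibits $(\Sigma,\phi)$ as the positive stabilization of $(\Sigma',\phi_0)$.

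I expect the main obstacle to be precisely this last point: showing that the geometric data (a single homologically essential arc $\gamma$ with $\phi(\gamma)$ disjoint, non--isotopic, and to the right) is enough to pin the monodromy down to the normal form $\phi_0\circ\tau_\alpha$ with $\phi_0$ supported away from the co--core. The subtlety is controlling $\phi$ not just near $\gamma$ but globally: a priori $\phi(\gamma)$ being a single arc only controls $\phi$ up to isotopy in a neighborhood of $\gamma\cup\phi(\gamma)$, and one must argue that the complement of a regular neighborhood of $\gamma$ can be arranged to be invariant, i.e.\ that the ``extra'' twisting can be absorbed. Right--veering is essential to exclude a negative twist (which would give a negative stabilization, not changing the manifold but not a positive destabilization either), and to exclude partial cancellations where $\phi(\gamma)$ is disjoint from $\gamma$ for reasons unrelated to a plumbed Hopf band; I would handle this by a careful analysis of the bigons and the cyclic order of intersections, possibly invoking minimal position and an innermost--arc argument on the pieces of $\phi(\gamma)$ relative to $\gamma$.
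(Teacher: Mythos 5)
Your forward direction matches the paper's: take $\gamma$ to be the co-core of the stabilizing handle, observe $d_e(\gamma,\phi(\gamma))=1$, and invoke Lemma~\ref{dist1} to rule out $\dist_e(\phi)=0$. (Minor slip: you wrote that $\tau_\alpha(\gamma)$ ``intersects $\gamma$ once''; it is $\alpha$ that meets $\gamma$ once, and consequently $\tau_\alpha(\gamma)$ can be isotoped to have \emph{disjoint} interior from $\gamma$, which is what you actually use in the next clause.)

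In the reverse direction you have the right strategy --- cut along $\gamma$ and recognize $\phi$ as $\phi_0\circ\tau_\alpha$ --- but you stop short at exactly the step that carries the content, and you acknowledge as much: ``I expect the main obstacle to be precisely this last point.'' That is a genuine gap, and the appeal to ``innermost-arc arguments'' and ``analysis of bigons'' would be a much less direct route than the one the paper takes. The missing move is concrete and short: the curve $\alpha$ you should twist about is simply $\gamma\cup\phi(\gamma)$ itself (smoothed and pushed into the interior). Since $\gamma$ and $\phi(\gamma)$ have disjoint interiors and share endpoints, their union is a closed curve; right-veering lets you arrange the smoothing so that $\alpha$ meets each of $\gamma$ and $\phi(\gamma)$ in a single transverse point. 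With that $\alpha$ in hand, one sees directly that $\tau_\alpha^{-1}\circ\phi$ fixes $\gamma$ (as $\tau_\alpha(\gamma)$ is isotopic to $\phi(\gamma)$), hence descends to a monodromy on the cut surface $\Sigma'$, and $(\Sigma,\phi)$ is exhibited as a positive stabilization of $(\Sigma',\phi')$. There is no need to separately isotope a residual $\phi_0$ off a handle, nor to globally ``control'' $\phi$ away from $\gamma$: once $\tau_\alpha^{-1}\circ\phi$ preserves $\gamma$, cutting along $\gamma$ automatically produces the destabilized open book. You should also note the small preliminary step the paper includes: if $\gamma$ has both endpoints at the same marked point, first isotope one endpoint so $\gamma$ becomes a properly embedded arc with distinct endpoints and remains disjoint from $\phi(\gamma)$, per the paper's ``important convention.''
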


We note that the obvious analogous theorem for left veering monodromies and negative destabilization also holds. The proof is left to the reader. Before proving this theorem we note that Theorem~\ref{dist2} from the introduction is an obvious corollary.

\begin{proof}
We first assume that $(\Sigma,\phi)$ destabilizes. Thus there is an open book $(\Sigma',\phi')$ of which $(\Sigma,\phi)$ is a stabilization. More specifically $\Sigma$ is $\Sigma'$ with a 1--handle attached and $\phi=\phi'\circ \tau_\alpha$ for some $\alpha$ that intersects the co-core of the attached 1--handle once. Let $\gamma$ denote this co-core. 

Now in the definition of the arc complex choose the marked points on the boundary to contain $\partial \gamma$ if $\partial \gamma$ is contained in two separate boundary components and to be one of the endpoints of $\gamma$ otherwise. In the latter case isotope one of the endpoints of $\gamma$ so that it is also at a marked point. One may easily check that $d_e(\gamma,\phi(\gamma))=1$. Thus $\dist_e(\phi)\leq 1.$ If $\dist_e(\phi)=0$ then $M_{(\Sigma,\phi)}$ would have an $S^2\times S^1$ summand by Lemma~\ref{lem:de0} and this is ruled out by the hypothesis of the theorem. Thus $\dist_e(\phi)= 1.$

Now suppose that $\dist_e(\phi)=1$. Then there is an arc $\gamma$ such that $d_e(\gamma,\phi(\gamma))=1$. That is $\gamma$ and $\phi(\gamma)$ have disjoint interiors. If $\gamma$ has both its endpoints at the same point then we can clearly isotope $\gamma$ (moving one of its endpoints) so that $\gamma$ is properly embedded and has distinct endpoints (recall our {\em important convention} above) and $\gamma$ and $\phi(\gamma)$ are still disjoint. Now let $\alpha$ be the simple closed curve in $\Sigma$ formed from $\gamma\cup \phi(\gamma)$ by connecting their endpoints and isotoping so that the curve is in the interior of $\Sigma$. Notice that due to the right veeringness of $\phi$ it is clear that we can choose $\alpha$ so that it intersects both $\gamma$ and $\phi(\gamma)$ each exactly once. One may easily verify that $\tau^{-1}_\alpha\circ \phi$ fixes $\gamma$. Thus we may form a surface $\Sigma'$ by cutting along $\gamma$ and $\tau_\alpha^{-1}\circ\phi$ will induce a diffeomorphism $\phi'$ of $\Sigma'$. It is also clear that $(\Sigma,\phi)$ is a stabilization of $(\Sigma',\phi')$.
\end{proof}
\begin{remark}
We note that the proof actually shows that, under the hypothesis of the theorem, an arc is the co-core of a 1--handled used to stabilize an open book if and only if the essential distance between it and its image under $\phi$ is 1. The co-core of a stabilizing 1--handle will be called a \dfn{stabilizing arc}. 
\end{remark}

\section{Non-destabilizable planar open book decompositions of $S^3$}

In this section we will prove Theorem~\ref{thm1}. To that end let $\Sigma_n$ be a compact planar surface with $(n+1)$-boundary components, $n\geq 4$, and $\phi_n\co \Sigma_n\to \Sigma_n$ be a diffeomorphism obtained as the composition of right handed Dehn twists along the curves shown in Figure~\ref{fig:sigman}. Since all other curves, except one, are disjoint, the conjugacy class of $\phi_n$ is independent of the order of the product.  Let $c_i$ denote the boundary components of $\Sigma_n$ as indicated in Figure~\ref{fig:sigman}.

\begin{figure}[htb]
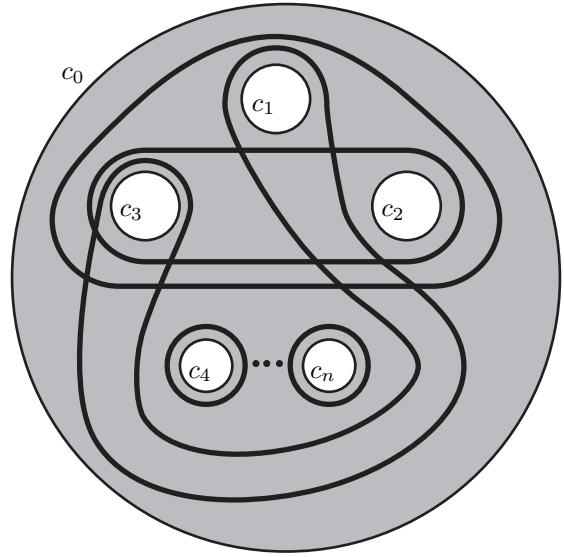

\begin{overpic}
{MainOB}
\put(20, 180){$c_0$}
\put(92,168){$c_1$}
\put(42, 128){$c_3$}
\put(141, 128){$c_2$}
\put(68, 67){$c_4$}
\put(114,67){$c_n$}
\end{overpic}
\caption{The surface $\Sigma_n$ with boundary components $c_0,\ldots, c_n$ labeled.}
\label{fig:sigman}
\end{figure}

\begin{lemma}
The open book $(\Sigma_n, \phi_n)$ supports the standard tight contact structure on $S^3$. 
\end{lemma}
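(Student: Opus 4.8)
The plan is to exhibit $(\Sigma_n, \phi_n)$ as an open book obtained from the trivial open book $(D^2, \mathrm{Id})$ by a sequence of positive stabilizations, and then invoke the two standard facts recalled in Subsection~\ref{ssec:obd}: that positive stabilization does not change the underlying $3$--manifold, and that it does not change the supported contact structure. Since the trivial open book $(D^2, \mathrm{Id})$ supports the standard tight contact structure on $S^3$, this immediately gives the conclusion. Concretely, I would argue that $(\Sigma_n, \phi_n)$ is a composition of $n+1$ positive stabilizations: one can build $\Sigma_n$ from the disk by attaching $1$--handles one at a time, and at each stage the new Dehn twist curve in Figure~\ref{fig:sigman} intersects the co-core of the newly attached handle exactly once. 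One must check that with an appropriate ordering of the handle attachments, every Dehn twist curve in the defining product for $\phi_n$ appears as a stabilizing curve for exactly one handle.

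**Key steps, in order.** First I would set up the correspondence between the boundary components $c_1,\dots,c_n$ of $\Sigma_n$ and a sequence of $1$--handle attachments to $D^2$, checking that after attaching all the handles we recover the planar surface with $n+1$ boundary components. Second, for the ordering of handle attachments, I would go through the curves in Figure~\ref{fig:sigman} and verify that each attaching step has a distinguished curve meeting the co-core of that handle transversely once; the one ``exceptional'' curve (the one not disjoint from the others, mentioned right before the lemma statement) needs slightly more care, but since the conjugacy class of $\phi_n$ is independent of the order of the product, I can choose the order that makes the stabilization structure manifest. Third, I would conclude that $M_{(\Sigma_n,\phi_n)} \cong M_{(D^2,\mathrm{Id})} = S^3$ and $\xi_{(\Sigma_n,\phi_n)} = \xi_{(D^2,\mathrm{Id})} = \xi_{std}$, the unique tight contact structure on $S^3$. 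Alternatively, if directly reading the stabilization structure off the figure is awkward, I would instead verify that the product of right-handed Dehn twists defining $\phi_n$ is equal (after Hurwitz moves / conjugation in the mapping class group) to a product realizing $\Sigma_n$ as an iterated stabilization of $(D^2,\mathrm{Id})$; this is a purely combinatorial computation in the planar mapping class group, which is generated by Dehn twists about curves encircling subsets of the boundary punctures.

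**Main obstacle.** The hard part will be bookkeeping: making the choice of handle-attachment order explicit and confirming that \emph{every} Dehn twist curve in Figure~\ref{fig:sigman} is accounted for as a stabilizing curve, with none left over and none appearing twice. In particular, the curve that fails to be disjoint from the others is the delicate point --- I expect it corresponds to the ``last'' stabilization (the one creating $c_n$, say), and one needs that after cutting $\Sigma_n$ along the co-cores of all the other handles, this curve still meets the remaining co-core exactly once. Everything else --- the invariance of $S^3$ and of $\xi_{std}$ under positive stabilization, and the identification of the trivial open book's contact structure with the tight one --- is standard and can be cited. I do not expect to need any contact-geometric input beyond the Giroux correspondence facts already stated; this is really a statement about recognizing a stabilized open book from its monodromy factorization.
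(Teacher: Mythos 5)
Your strategy is based on a claim that contradicts the main theorem of the paper. You propose to exhibit $(\Sigma_n,\phi_n)$ as an iterated positive stabilization of $(D^2,\mathrm{Id})$. If such a stabilization sequence existed, then $(\Sigma_n,\phi_n)$ would in particular be a (single) positive stabilization of some smaller open book, and hence would destabilize. But the entire point of the construction is that $(\Sigma_n,\phi_n)$ does \emph{not} destabilize: this is Theorem~\ref{thm1}, proved via Theorem~\ref{etd2} (which computes $\dist_e(\phi_n)=2$) together with Theorem~\ref{dist2}. So the ``bookkeeping'' you identify as the main obstacle --- finding an ordering of the handle attachments so that each Dehn twist curve in Figure~\ref{fig:sigman} becomes a stabilizing curve --- is not a matter of care but an impossibility: $\dist_e(\phi_n)>1$ means no properly embedded arc in $\Sigma_n$ is a stabilizing arc for $\phi_n$, so no such ordering can exist, no matter how you reorganize the Dehn twist factorization by Hurwitz moves or conjugation. (There is also a minor miscount: passing from $D^2$ to a planar surface with $n+1$ boundary components requires $n$ one-handle attachments, not $n+1$.)

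The paper's proof sidesteps the stabilization structure entirely, and necessarily so. Since $\phi_n$ is a product of right-handed Dehn twists, the supported contact structure is Stein fillable and hence tight by \cite{Giroux02}; and since $S^3$ carries a unique tight contact structure by \cite{Eliashberg92a}, it then suffices to verify that $M_{(\Sigma_n,\phi_n)}\cong S^3$. That verification is done by Kirby calculus on the surgery diagram of Figure~\ref{fig:surgs3}. This is the route one must take here: it certifies both that the manifold is $S^3$ and that the contact structure is the standard tight one without ever asserting (falsely, in this case) that the open book is a stabilization of the disk.
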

\begin{proof}
Since $\phi_n$ is a composition of right handed Dehn twists it is well known that the supported contact structure is tight, \cite{Giroux02}. Since $S^3$ has a unique tight contact structure, \cite{Eliashberg92a}, we are left to see that $(\Sigma_n, \phi_n)$ is an open book for $S^3$. One may readily verify that Figure~\ref{fig:surgs3} is a Kirby diagram for $M_{(\Sigma_n,\phi_n)}$. After sliding the left most 0--framed unknot over right most 0--framed unknot (first isotope the right unknot to be concentric with the left unknot) 
\begin{figure}[htb]
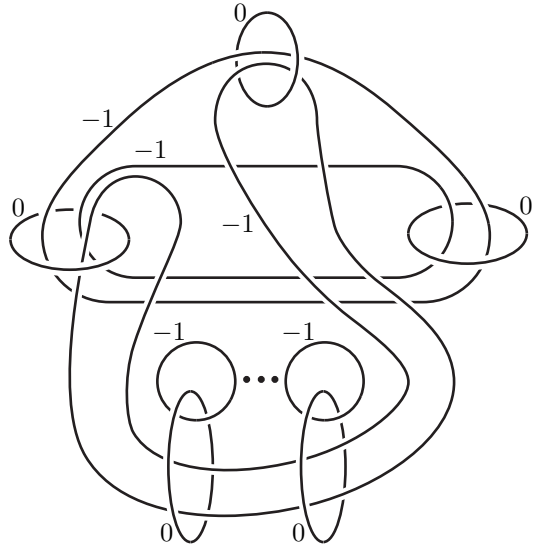

\begin{overpic}
{SureryDiagramS3}
\put(4,126){$0$}
\put(196,127){$0$}
\put(88,200){$0$}
\put(60,3){$0$}
\put(110,3){$0$}
\put(57, 79){$-1$}
\put(106, 79){$-1$}
\put(30, 160){$-1$}
\put(50, 148){$-1$}
\put(83, 120){$-1$}
\end{overpic}
\caption{Surgery diagram for the manifold described by the open book decomposition $(\Sigma_n,\phi_n)$.}
\label{fig:surgs3}
\end{figure}
one sees that all but one non-zero framed curve has a zero framed meridian and so may be cancelled form the picture. Once this is done we are left with a Hopf link whose components are framed 0 and $-1$, thus giving $S^3$. 
\end{proof}

To see that $(\Sigma_n,\phi_n)$ does not destabilize we compute the essential translation distance. 
\begin{theorem}\label{etd2}
With the notation above 
\[
\dist_e(\phi_n)=2.
\]
\end{theorem}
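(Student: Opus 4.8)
The goal is to establish both bounds $\dist_e(\phi_n)\leq 2$ and $\dist_e(\phi_n)\geq 2$. The upper bound should be the easy half: I would exhibit an explicit homologically essential arc $\gamma$ in the planar surface $\Sigma_n$ together with a path of length $2$ in $\mathcal{A}_e(\Sigma_n)$ from $\gamma$ to $\phi_n(\gamma)$. Since $\Sigma_n$ is planar, a convenient choice of $\gamma$ is an arc connecting two ``outer'' boundary components (say joining $c_0$ to one of the $c_i$ with $i\geq 2$) that meets as few of the twisting curves in Figure~\ref{fig:sigman} as possible; tracking $\phi_n(\gamma)$ through the product of right-handed Dehn twists and finding one intermediate disjoint essential arc (and verifying no single arc works, which is part of the lower bound) gives the estimate. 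One must be careful that the intermediate arc and the endpoints are genuinely homologically essential, i.e.\ each separates the distinguished boundary points in a way that makes the arc non-nullhomologous in $(\Sigma_n,\partial\Sigma_n)$; in a planar surface an arc is homologically essential precisely when its two endpoints lie on distinct boundary components, so one just keeps the endpoints on different $c_i$'s throughout.

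\textbf{The lower bound is the crux, and it splits into two inequalities: $\dist_e(\phi_n)\neq 0$ and $\dist_e(\phi_n)\neq 1$.} The first follows immediately from Lemma~\ref{lem:de0}: $M_{(\Sigma_n,\phi_n)}=S^3$ has no $S^2\times S^1$ summand, so $\dist_e(\phi_n)\geq 1$. For $\dist_e(\phi_n)\neq 1$ I would argue by contradiction using the stabilization characterization: by Theorem~\ref{dist2} (applied, as noted, to the tight contact $S^3$ which has no $S^2\times S^1$ summand), $\dist_e(\phi_n)=1$ would force $(\Sigma_n,\phi_n)$ to destabilize, so equivalently I must show there is no \emph{stabilizing arc}, i.e.\ no properly embedded essential arc $\delta$ with $d_e(\delta,\phi_n(\delta))=1$ whose complement realizes $(\Sigma_n,\phi_n)$ as a stabilization. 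Concretely, using the remark after Theorem~\ref{dist2}, it suffices to show: for every homologically essential arc $\delta$ in $\Sigma_n$, the arc $\phi_n(\delta)$ cannot be made disjoint from $\delta$ (equivalently, $\phi_n$ is right-veering and no arc is fixed-or-disjoint-from-its-image).

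\textbf{This is the main obstacle and where the specific geometry of Figure~\ref{fig:sigman} must be used.} The plan is a case analysis on the isotopy class of an essential arc $\delta$ in the planar surface $\Sigma_n$, organized by which pairs of boundary components the two endpoints of $\delta$ lie on and by the intersection pattern of $\delta$ with the configuration of twisting curves. Because $\Sigma_n$ is planar with $n+1$ boundary components, isotopy classes of arcs are combinatorially controlled (for instance by the partition of the other boundary circles into the two sides of $\delta$), so the analysis is finite in nature once one fixes how $\delta$ sits relative to the Dehn twist curves. For each class one computes (or estimates) the geometric intersection number $\delta\cdot\phi_n(\delta)$ after pushing $\phi_n(\delta)$ through the product of positive twists; right-veeringness guarantees the contributions add with consistent sign so cancellations do not occur, and the point is that in this particular monodromy every essential arc hits at least two of the twisting curves in a way that forces $\delta\cdot\phi_n(\delta)\geq 1$. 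I would expect Lemma~\ref{lem:b1} to be the key labor-saving device: capping off all but a few well-chosen boundary components of $\Sigma_n$ reduces the problem to a small surface (a several-times-punctured disk) where one can check by hand that the images of the relevant arcs under the reduced monodromy have distance $\geq 2$; Lemma~\ref{lem:b1} then lifts this lower bound back to $\Sigma_n$. The delicate part will be choosing \emph{which} boundary components to cap off so that (a) every homologically essential arc of $\Sigma_n$ descends to an essential arc downstairs, and (b) the reduced picture is simple enough to analyze directly—most likely this means capping off $c_4,\ldots,c_{n-1}$ and possibly one more, leaving a fixed small surface independent of $n$, which is also what makes the statement uniform in $n\geq 4$.
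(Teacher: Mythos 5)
Your skeleton matches the paper's: the upper bound is an explicit arc (the paper uses a horizontal arc from $c_0$ to $c_2$, whose image visibly intersects it but admits a common disjoint essential arc), $\dist_e\neq 0$ comes from Lemma~\ref{lem:de0}, and $\dist_e\neq 1$ is reduced via Lemma~\ref{lem:b1} by capping off boundary components to a small surface that is independent of $n$. You have also correctly observed that on a planar surface an arc with both endpoints on the same boundary component separates, which is exactly the paper's reason to restrict attention to arcs joining distinct boundary components. So the outline is sound and you identified the right labor-saving tool.

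Where your route genuinely diverges is in the heart of the argument: how to show the reduced monodromy on the small capped-off surface has essential translation distance $>1$. You propose a direct, arc-by-arc geometric intersection analysis. The paper instead runs Theorem~\ref{dist2} backwards: since $\dist_e(\phi'')=1$ would force $(\Sigma'',\phi'')$ to destabilize, it suffices to show that the reduced open book does \emph{not} destabilize, and this is done topologically. The paper computes, via Kirby calculus, that $(\Sigma'',\phi'')$ describes $(-2)$--surgery on the right-handed trefoil (in one case) and $2$--surgery on a specific knot $K$ (in the other), then argues via the first homology of the associated Lefschetz fibration that any positive destabilization would be an open book for a lens space $L(2,1)$, and rules that out using Moser's classification of surgeries on the trefoil (respectively, a fundamental-group computation). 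Three different cappings are needed depending on where the arc's endpoints lie: cap off everything except $c_0$ and $c_2$ (Lemma~\ref{lem4}, giving an annulus whose monodromy is a squared Dehn twist, which is the one case where a direct check like yours does appear), cap off $c_2$ and all but one of $c_4,\ldots,c_n$ (Lemma~\ref{lem2}), and cap off $c_0$ and all but one of $c_4,\ldots,c_n$ (Lemma~\ref{lem3}); together these cover all arcs joining distinct boundary components.

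There is also a concrete gap in your heuristic for the direct approach. The claim that ``right-veeringness guarantees the contributions add with consistent sign so cancellations do not occur'' does not by itself preclude $d_e(\delta,\phi_n(\delta))=1$: plenty of right-veering monodromies admit stabilizing arcs (that is how destabilizable open books with tight contact structures exist at all), so right-veeringness alone cannot force $\delta\cdot\phi_n(\delta)\geq 1$. You would still need a case analysis keyed to the specific curve configuration of Figure~\ref{fig:sigman}, and since arcs in a multi-holed disk can wind arbitrarily around the holes, the set of isotopy classes to check is infinite; this is precisely the combinatorial difficulty the paper sidesteps by converting to a closed-3-manifold obstruction that is insensitive to the particular arc. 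If you want to make your version work you would need an a priori normal form for arcs on $\Sigma''$ (e.g.\ via train tracks or a change-of-coordinates principle) that reduces the check to finitely many cases; absent that, the paper's surgery argument is the more tractable path.
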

Before proving this we establish Theorem~\ref{thm1}. 
\begin{proof}[Proof of Theorem~\ref{thm1}]
The proof immediately follows from Theorem~\ref{etd2} and Theorem~\ref{dist2}. 
\end{proof}

\begin{proof}[Proof of Theorem~\ref{etd2}]
First note that if $\gamma$ is a horizontal arc connecting boundary $c_0$ to $c_2$ then \[d_e(\gamma,\phi_n(\gamma))=2\] since they intersect but it is easy to find an arc disjoint from both. Thus $\dist_e(\phi_n)\leq 2$. We are left to see it is not $1$ or 0. Of course it cannot be zero by Lemma~\ref{lem:de0}. We will see that it is not $1$ in Lemmas~\ref{lem4}, \ref{lem2} and~\ref{lem3}  below. More specifically the lemmas show that for any arc $\gamma$ we have $d_e(\gamma,\phi_n(\gamma))>1$. Notice that we only need to check arcs connecting different boundary components since on a planar surface a stabilizing arc cannot have both endpoints on the same boundary component since it would separate the surface.
\end{proof}


\begin{lemma}\label{lem4}
Let $\gamma$ be any arc connecting $c_0$ to $c_2$ in $\Sigma_n$, then $d_e(\gamma,\phi_n(\gamma))>1$. Similarly any arc $\gamma$ connecting $c_i$ to $c_j$ with $4\leq i\not=j \leq n$ will have $d_e(\gamma,\phi_n(\gamma))>1$.
\end{lemma}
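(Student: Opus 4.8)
The plan is to show that for any essential arc $\gamma$ connecting $c_0$ to $c_2$ (and similarly any essential arc connecting $c_i$ to $c_j$ with $4\le i\ne j\le n$), the arc $\gamma$ and its image $\phi_n(\gamma)$ must intersect in their interiors, so $d_e(\gamma,\phi_n(\gamma))\ge 2$. Equivalently, invoking the remark after Theorem~\ref{dist2}, I want to rule out that such a $\gamma$ is a stabilizing arc. The first step is to reduce the problem to a more tractable surface using Lemma~\ref{lem:b1}: if I cap off all the boundary components of $\Sigma_n$ except for a well-chosen few, the distance can only go up, so it suffices to exhibit, in a capped-off surface $\Sigma'$ where $\gamma$ and $\phi_n(\gamma)$ still make sense (i.e.\ no endpoint of $\gamma$ lies on a capped component, and $\gamma$ remains essential), a proof that the capped images still intersect. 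For the $c_0$-to-$c_2$ case I would cap off everything except $c_0, c_2$ and whichever single boundary component is needed to keep $\gamma$ homologically essential and to retain enough of the Dehn-twist curves; for the $c_i$-to-$c_j$ case with $i,j\ge 4$, I would cap off everything except $c_i$, $c_j$ (and possibly $c_0$ or one more) so that the relevant twist curves linking $c_i$ and $c_j$ survive.

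The heart of the argument is then a direct, hands-on analysis on this small capped surface — effectively an annulus-plus-a-few-holes — of how $\phi_n$ (now a product of just the surviving right-handed Dehn twists) acts on the isotopy classes of arcs between the two surviving non-capped boundary components. I would classify such arcs by how many times and in what pattern they wind around the remaining holes, using either a geometric intersection-number bookkeeping with the twisting curves or, on a three- or four-holed sphere, an explicit coordinate system for arcs. The key computation is that applying the surviving right-handed Dehn twist(s) to any such arc increases (or at least changes) the geometric intersection number with the twisting curve enough that the twisted arc cannot be made disjoint from the original: a right-handed twist along a curve that $\gamma$ hits essentially will force a nonremovable crossing between $\gamma$ and $\tau_\alpha(\gamma)$. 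Right-veeringness of $\phi_n$ (automatic, since it is a product of right-handed twists) is what guarantees the crossings all go the same way and cannot cancel.

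The main obstacle I anticipate is the case analysis over the isotopy classes of arcs on the capped surface: a priori there are infinitely many isotopy classes (arcs can w鸡nd arbitrarily around the holes), so I need a clean invariant — most likely the vector of geometric intersection numbers of $\gamma$ with the cores of the surviving twist regions, together with a parity or ordering constraint coming from planarity — that is monotone (or controlled) under $\phi_n$ and that forces $\gamma \cap \phi_n(\gamma) \ne \emptyset$ whenever $\gamma$ is essential. Getting this invariant to behave well when $\gamma$ already has large intersection number with several twist curves at once is where the bookkeeping is delicate. A secondary, more routine, point to be careful about is making sure the capping-off really preserves homological essentiality of $\gamma$: an arc between $c_0$ and $c_2$ that is homologically essential in $\Sigma_n$ could become inessential after capping the wrong components, so I must choose which boundary components to retain so that a homologically essential class stays homologically essential, and handle separately (or by a symmetric argument) the finitely many "types" of essential arc. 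I would organize the write-up as: (1) state the capping reduction; (2) set up arc coordinates on the reduced surface; (3) compute the action of $\phi_n$ in these coordinates; (4) conclude $d_e(\gamma,\phi_n(\gamma))\ge 2$ in each case; with the $c_i$–$c_j$ ($i,j\ge 4$) case being essentially a symmetric repeat of the $c_0$–$c_2$ case.
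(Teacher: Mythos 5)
Your opening move is the right one: reduce via Lemma~\ref{lem:b1} by capping off boundary components. But from there your plan diverges from the paper in a way that makes the problem much harder than it needs to be, and you never actually close the argument.

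The worry that drives your complication---that capping might destroy the homological essentiality of $\gamma$---is a red herring for the arcs in question. An arc running from $c_0$ to $c_2$ is automatically homologically essential in any capped surface in which both $c_0$ and $c_2$ survive; in particular it is essential in the annulus obtained by capping off \emph{every} boundary component except $c_0$ and $c_2$. That is the observation you are missing. The paper caps all the way down to this annulus, at which point the induced monodromy $\phi'$ is simply $\tau^2$, the square of the Dehn twist along the core. On an annulus the essential translation distance of $\tau^2$ is visibly $\ge 2$ (any essential arc is moved by two full twists, which forces an unremovable intersection), and Lemma~\ref{lem:b1} then gives $d_e(\gamma,\phi_n(\gamma))>1$ in $\Sigma_n$. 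The $c_i$--$c_j$ case with $4\le i\ne j\le n$ is the same: cap to the annulus with $c_i$ and $c_j$ remaining and observe the monodromy there is again a nontrivial power of the core twist. By stopping the capping one or two components too early, you hand yourself an infinite family of isotopy classes to analyze and are forced into the ``arc coordinates / winding invariant / monotonicity'' bookkeeping you describe; that analysis is never carried out, and it is exactly what the reduction to an annulus renders unnecessary.

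Two smaller points. First, you state the direction of Lemma~\ref{lem:b1} backwards (``the distance can only go up'' when capping); capping can only \emph{decrease} distance, which is why showing the distance is $>1$ in the capped surface suffices. Your next sentence uses the lemma correctly, so this is a verbal slip rather than a logical error, but it is worth fixing. Second, invoking right-veeringness to argue that ``crossings cannot cancel'' is more machinery than the situation requires once you are on the annulus; the intersection is forced by elementary considerations about $\tau^2$ acting on arcs there, with no appeal to veeringness.
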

\begin{proof}
Let $(\Sigma',\phi')$ be the open book obtained form $(\Sigma_n,\phi_n)$ by capping off all boundary components except $c_0$ and $c_2$. Notice that $\Sigma'$ is an annulus and $\phi'$ is the square of the Dehn twist about the core curve in $\Sigma'$. Thus it is clear that $\dist_e(\phi')>1$ and hence by Lemma~\ref{lem:b1} we know $d_e(\gamma,\phi_n(\gamma))>1$. 
The other cases are follow similarly. 
\end{proof}

\begin{lemma}\label{lem2}
Let $(\Sigma',\phi')$ be obtained from $(\Sigma_n,\phi_n)$ by capping off $c_2$. Then $\dist_e(\phi')>1$. In particular, any arc $\gamma$ in $\Sigma_n$ with endpoints on any boundary component except $c_2$ must satisfy $d_e(\gamma,\phi_n(\gamma))>1$.
\end{lemma}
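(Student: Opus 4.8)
The plan is to reduce to a simpler surface by capping off $c_2$, then analyze the resulting diffeomorphism $\phi'$ directly on the arc complex. After capping off $c_2$, the surface $\Sigma'$ is planar with $n$ boundary components $c_0, c_1, c_3, c_4, \ldots, c_n$, and the monodromy $\phi'$ is the composition of right-handed Dehn twists along the images of the curves in Figure~\ref{fig:sigman} (the curve that bounded $c_2$ together with another boundary now becomes isotopic into the boundary or into a curve encircling fewer holes, and should be tracked carefully). The key point I would establish is that $\phi'$ still contains ``enough twisting'' that no essential arc can be made disjoint from its image: concretely, I expect that for every essential properly embedded arc $\gamma$ in $\Sigma'$ connecting two distinct boundary components, the arc $\phi'(\gamma)$ is forced to wrap around in a way that guarantees $i(\gamma, \phi'(\gamma)) \geq 1$ after minimal isotopy, and moreover that $\gamma$ and $\phi'(\gamma)$ are not isotopic. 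This gives $d_e(\gamma, \phi'(\gamma)) \geq 2$, i.e. $\dist_e(\phi') > 1$. The ``in particular'' clause then follows immediately from Lemma~\ref{lem:b1}: any arc $\gamma$ in $\Sigma_n$ with endpoints avoiding $c_2$ descends to an arc in $\Sigma'$, and since its distance to $\phi'(\gamma)$ is greater than $1$ in $\mathcal{A}_e(\Sigma')$, its distance to $\phi_n(\gamma)$ in $\mathcal{A}_e(\Sigma_n)$ is also greater than $1$.

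For the core claim that $\dist_e(\phi') > 1$, I would argue case-by-case according to which pair of boundary components $\gamma$ connects, using the capping trick of Lemma~\ref{lem:b1} one more time wherever possible. For instance, if $\gamma$ connects $c_0$ to some $c_i$ with $i \geq 4$, cap off everything except those two components; the induced monodromy should again be a nontrivial power of a Dehn twist on an annulus (or a product of twists on a pair of pants), for which $\dist_e > 1$ is transparent. The genuinely new cases are those not covered by capping down to an annulus — for example arcs connecting $c_0$ to $c_1$, or $c_1$ to some $c_i$ — and here one must look at the image of the arc under $\phi'$ explicitly on the page and check minimal intersection directly. Right-veeringness of $\phi'$ (inherited from $\phi_n$, since capping off a boundary component preserves the property that a product of right-handed Dehn twists is right veering) is the tool that prevents $\gamma$ and $\phi'(\gamma)$ from being ``accidentally'' disjoint with the wrong orientation.

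The main obstacle I anticipate is the bookkeeping for exactly which curves survive capping off $c_2$ and what isotopy class they represent on $\Sigma'$: one of the Dehn-twist curves in Figure~\ref{fig:sigman} borders $c_2$, and after capping it may become inessential or may merge with a neighboring curve, so $\phi'$ is not simply ``the same product with one factor deleted.'' Getting this identification right is what makes the subsequent annulus/pair-of-pants reductions valid. A secondary technical point is handling arcs with an endpoint on $c_1$, since $c_1$ plays a somewhat distinguished role in the picture; for those I would not expect a clean capping reduction and would instead track $\phi'(\gamma)$ arc-by-arc. Once the image curves on $\Sigma'$ are correctly identified, each case should reduce either to Lemma~\ref{lem:b1} applied to an obviously-high-distance small surface, or to a short direct intersection-number computation.
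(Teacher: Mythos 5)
Your proposal diverges from the paper's proof in a way that leaves a genuine gap. The paper does not try to directly bound intersection numbers $i(\gamma,\phi'(\gamma))$ for all arcs. Instead it caps off $c_2$ \emph{and} all but one of $c_4,\dots,c_n$ to reach a planar surface $\Sigma''$ with exactly four boundary components. Then it invokes Theorem~\ref{dist2} backwards: since the supported contact structure is Stein fillable (hence tight and right veering) and the 3--manifold has no $S^2\times S^1$ summand, showing $\dist_e(\phi'')>1$ is \emph{equivalent} to showing $(\Sigma'',\phi'')$ is not a stabilization. The paper then identifies the 3--manifold as $(-2)$--surgery on the right-handed trefoil via Kirby calculus, observes that any destabilization would yield a pair-of-pants open book $(P,\psi)$ with $\psi=\tau_0^p\tau_1^q\tau_2^r$ whose manifold has $|H_1|=pq+qr+rp=2$, forces $\{p,q,r\}=\{0,1,2\}$ and hence $L(2,1)$, and rules this out by Moser's classification of lens space surgeries on the trefoil (plus a fundamental group check). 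That homology/surgery argument is the entire content of the lemma, and it never appears in your proposal.

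Your proposal instead tries to verify $i(\gamma,\phi'(\gamma))\geq 1$ case by case, capping down to an annulus or pair of pants where possible and otherwise tracking arcs directly. Two concrete problems. First, the claim that for an annulus or pair of pants ``$\dist_e>1$ is transparent'' is false in general: a single positive Dehn twist on an annulus has $\dist_e=1$ (that open book destabilizes to the disk), and likewise $\tau_0^p\tau_1^q\tau_2^r$ on a pair of pants can easily have $\dist_e=1$ depending on $p,q,r$. You would need to verify that the specific powers arising after capping are large enough, and you never compute them. Second, you explicitly punt on the ``genuinely new'' cases (arcs hitting $c_1$, or $c_0$ to $c_1$), saying one must check minimal intersection directly, without doing so; these are exactly the cases where a direct intersection-number argument is painful and where the paper's reduction to a 4--boundary surface plus a non-destabilization argument does the work. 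Finally, right-veeringness alone does not prevent $\gamma$ and $\phi'(\gamma)$ from being disjoint --- disjointness with $\phi'(\gamma)$ to the right of $\gamma$ is precisely the $\dist_e=1$ situation --- so it cannot be the tool that rules out $d_e(\gamma,\phi'(\gamma))=1$ as you suggest. The ``in particular'' reduction to Lemma~\ref{lem:b1} at the end of your proposal is fine and matches the paper, but the core bound $\dist_e(\phi')>1$ is not established.
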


\begin{proof} We will show that the open book obtained from $(\Sigma', \phi')$ by capping off all but one of the boundary components $c_4,\ldots, c_n$ satisfies $\dist_e(\phi'')>1$. Combining this with Lemma~\ref{lem:b1} and~\ref{lem4} we will clearly have that $\dist_e(\phi')>1$. 

The open book decomposition $(\Sigma'', \phi'')$ is shown on the lefthand side of Figure~\ref{fig:obsurg2}. A Kirby diagram for the corresponding manifold is shown on the righthand side of Figure~\ref{fig:obsurg2} 
\begin{figure}[htb]
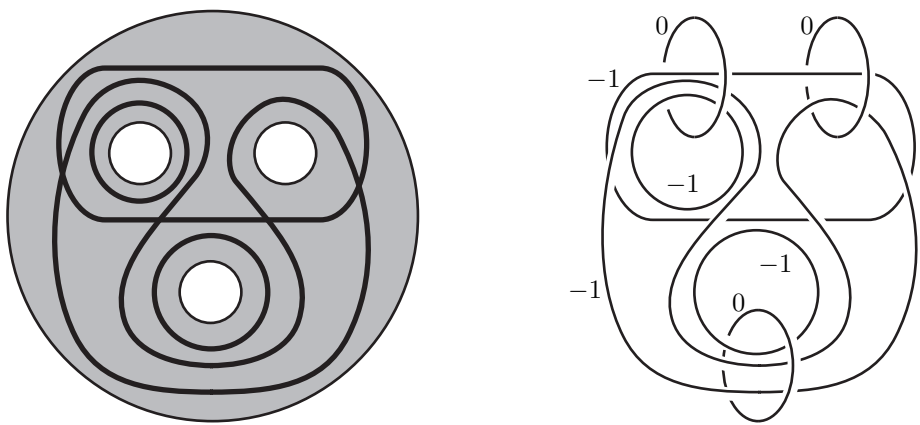

\begin{overpic}
{OBSurg2}
\put(246,150){$0$}
\put(301,150){$0$}
\put(275,45){$0$}
\put(220,130){$-1$}
\put(213,50){$-1$}
\put(285,60){$-1$}
\put(250,90){$-1$}
\end{overpic}
\caption{The open book decomposition $(\Sigma'',\phi'')$ on the lefthand side. On the righthand side is a Kirby picture for the manifold supported by $(\Sigma'',\phi'')$.}
\label{fig:obsurg2}
\end{figure}
and an easy exercise in Kirby calculus yields that this manifold is the one obtained from $(-2)$--surgery on the right handed trefoil in $S^3$. Since all the Dehn twist are positive we also see that the contact structure is Stein fillable and hence tight. We claim that the open book is not a stabilization. If it were then it would be a stabilization of an open book decomposition $(P, \psi)$,  where $P$ is a pair of pants and $\psi$ is a product of some right handed Dehn twists, since if one ever used a left handed Dehn twist the monodromy would be left veering and the contact structure would be overtwisted \cite{HondaKazezMatic07}.  Label the boundary components of $P$ and suppose $\psi=\tau_{0}^{p}\tau_{1}^{q}\tau_{2}^{r}$, where $\tau_{i}$ is the right handed Dehn twist along a simple closed curve parallel to the $i$th boundary component,  and $p,q,r$ are all nonnegative integers. Then the open book decomposition $(P, \psi)$ supports the 3-manifold obtained by 0--surgery on the unknot followed by $p, q$ and $r$ surgeries on three meridians to the unknot. The first homology of this manifold has size $pq+qr+rp$. On the other hand, the first homology of $(-2)$--surgery on the trefoil is $\mathbb{Z}_{2}$. So $pq+qr+rp=2$, in particular $\{p,q,r\}=\{0,1,2\}$. Hence $(P, \psi)$ supports the lens space $L(2,1)$. It is well known, see \cite{Moser71}, that the only surgeries on the right handed trefoil knot yielding a Lens space are the 5 and 7 surgeries (in addition one can easily check that the fundamental group the manifold described by $(\Sigma'',\phi'')$ is not $\Z_2$). Thus  $(\Sigma'', \phi'')$ does not destabilize. 
\end{proof} 

\begin{lemma}\label{lem3}
Let  $(\Sigma',\phi')$ be obtained from $(\Sigma_n,\phi_n)$ by capping off $c_0$. Then $\dist_e(\phi')>1$. In particular, any arc $\gamma$ in $\Sigma_n$ with endpoints on any boundary component except $c_0$ must satisfy $d_e(\gamma,\phi_n(\gamma))>1$.
\end{lemma}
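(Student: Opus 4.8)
The plan is to mimic the strategy of Lemma~\ref{lem2}: reduce to a small subsurface by capping off extra boundary components, identify the resulting manifold via Kirby calculus, and then rule out destabilization by a homological/surgery argument together with the right-veering constraint. Concretely, I would let $(\Sigma'',\phi'')$ be the open book obtained from $(\Sigma',\phi')$ (which is $(\Sigma_n,\phi_n)$ with $c_0$ capped off) by capping off all but one of the boundary components $c_4,\ldots,c_n$. By Lemma~\ref{lem:b1} together with Lemma~\ref{lem4}, it suffices to show $\dist_e(\phi'')>1$. The surface $\Sigma''$ is planar with boundary components coming from $c_1,c_2,c_3$ and one surviving $c_i$ ($i\geq 4$), and since on a planar surface a stabilizing arc must connect distinct boundary components (as noted in the proof of Theorem~\ref{etd2}), and Lemma~\ref{lem4} already handles arcs between the $c_i$'s with $i\geq 4$ as well as the $c_0$-$c_2$ case, I only need to worry about the remaining boundary components of $\Sigma''$.

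Next I would draw the Kirby diagram for $M_{(\Sigma'',\phi'')}$ exactly as in the proof of the main lemma (capping off $c_0$ instead of $c_2$ changes which $-1$-framed curve and which $0$-framed meridian survive), and do an easy Kirby calculus exercise to recognize the resulting 3--manifold as a surgery on a knot in $S^3$ — I expect it to again be a surgery on the right-handed trefoil (or a closely related torus knot), with first homology a small cyclic group. Then, following the template of Lemma~\ref{lem2}: if $(\Sigma'',\phi'')$ destabilized, it would destabilize to an open book $(P,\psi)$ on a pair of pants with $\psi=\tau_0^p\tau_1^q\tau_2^r$ for nonnegative integers $p,q,r$ (right-veeringness forces the twists to be positive, since a negative twist would make the monodromy left veering and the contact structure overtwisted by \cite{HondaKazezMatic07}, contradicting tightness which follows from positivity of $\phi_n$). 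The manifold $M_{(P,\psi)}$ is obtained by $0$-surgery on an unknot and $p,q,r$-surgeries on three meridians, so $|H_1|=pq+qr+rp$; matching this with the order of $H_1(M_{(\Sigma'',\phi'')})$ pins down $\{p,q,r\}$ up to a short list, hence identifies $M_{(P,\psi)}$ as a specific lens space. Finally, invoking the classification of lens space surgeries on torus knots (e.g.\ \cite{Moser71}) — and, if needed, a direct computation that $\pi_1(M_{(\Sigma'',\phi'')})$ is not the relevant cyclic group — yields a contradiction, so $(\Sigma'',\phi'')$ does not destabilize and $\dist_e(\phi'')>1$.

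The main obstacle I anticipate is bookkeeping rather than conceptual: correctly reading off the Kirby diagram for $M_{(\Sigma'',\phi'')}$ after capping off $c_0$ (the roles of the various $0$- and $-1$-framed curves in Figure~\ref{fig:surgs3} change, and one must be careful which meridians survive the handle slides), and then making sure the homological comparison is tight enough — i.e.\ that $pq+qr+rp$ equals the order of $H_1$ forces $\{p,q,r\}$ into a list short enough that the lens space / fundamental group obstruction actually applies. There is also a subtlety to double-check: one must confirm that every potentially stabilizing arc in $\Sigma''$ not already covered by Lemma~\ref{lem4} has both endpoints among the boundary components present in $\Sigma''$, so that capping down to $(\Sigma'',\phi'')$ genuinely detects all the relevant arcs; this is where the planar-surface observation (stabilizing arcs join distinct boundary components) and a careful enumeration of boundary-component pairs do the work.
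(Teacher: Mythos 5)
Your plan tracks the paper's proof of Lemma~\ref{lem3} almost exactly: cap off to $(\Sigma'',\phi'')$, invoke Lemmas~\ref{lem:b1} and~\ref{lem4} to reduce to checking $\dist_e(\phi'')>1$, identify $M_{(\Sigma'',\phi'')}$ by Kirby calculus, and reuse the $|H_1|=pq+qr+rp$ argument from Lemma~\ref{lem2} to pin down any putative pair-of-pants destabilization as $L(2,1)$. The one substantive misprediction is the guess that $M_{(\Sigma'',\phi'')}$ is a surgery on the trefoil or a nearby torus knot: the Kirby calculus in the paper (Figure~\ref{kc}) shows it is instead $2$-surgery on a more complicated knot $K$, and since $K$ is not identified as a torus knot the Moser classification is simply unavailable. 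The paper therefore takes precisely your fallback route, computing a presentation of $\pi_1(M_{(\Sigma'',\phi'')})$ and exhibiting a quotient onto the dihedral group $D_3$ to conclude the group is not $\Z_2$. So your structural outline is correct, but the genuine content of the lemma — carrying out the Kirby calculus to recognize $K$ and producing the $\pi_1$ obstruction — is exactly what your proposal leaves undone, and that is where essentially all of the work lies.
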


\begin{proof} 
Arguing as in the proof of Lemma~\ref{lem2} it suffices to consider the open book $(\Sigma'', \phi'')$ obtained from $(\Sigma', \phi')$ by capping off all but one of the boundary components $c_4,\ldots, c_n$ satisfies $\dist_e(\phi'')>1$. That open book decomposition is shown on the lefthand side of Figure~\ref{fig:ob3} and supports 
\begin{figure}[htb]
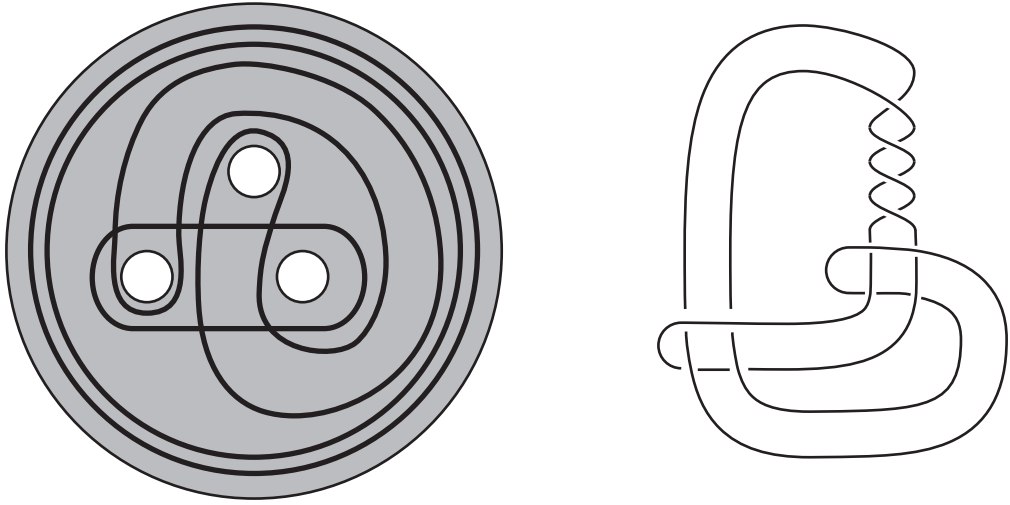

\begin{overpic}
{OB3}
\end{overpic}
\caption{The open book decomposition $(\Sigma'',\phi'')$ on the lefthand side. On the righthand side is the knot $K$.}
\label{fig:ob3}
\end{figure}
a Stein fillable contact structure on the 3--manifold. One can produce a Kirby picture for this manifold as we have done above and then an exercise in Kirby calculus, see Figure~\ref{kc},  shows this manifold is obtained from 2--surgery on the knot $K$ shown on the righthand side of Figure~\ref{fig:ob3}. 
\begin{figure}[htb]
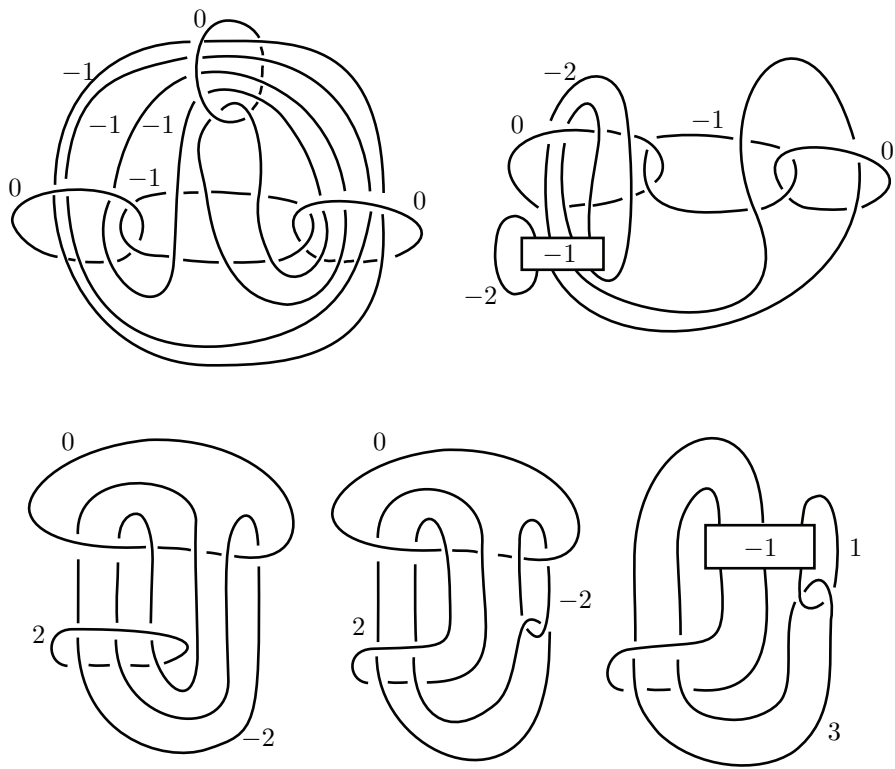

\begin{overpic}
{KC}
\put(0,216){$0$}
\put(70,280){$0$}
\put(153, 211){$0$}
\put(20,260){$-1$}
\put(30, 240){$-1$}
\put(50,240){$-1$}
\put(45,220){$-1$}
\put(202,191){$-1$}
\put(172,175){$-2$}
\put(202, 260){$-2$}
\put(190,240){$0$}
\put(330, 230){$0$}
\put(258, 242){$-1$}
\put(9,47){$2$}
\put(88,8){$-2$}
\put(20,120){$0$}
\put(130, 50){$2$}
\put(208, 60){$-2$}
\put(138,120){$0$}
\put(310, 10){$3$}
\put(318, 80){$1$}
\put(278, 80){$-1$}
\end{overpic}
\caption{The Kirby Calculus to get from a surgery presentation of the manifold supported by $(\Sigma'', \phi'')$, shown in the upper left, to $2$ surgery on the knot $K$. To go from the upper left figure to the upper right figure, slide the outer most $(-1)$--framed 2--handle over the parallel one, also slide the non-oval $(-1)$--framed 2--handle over the this 2--handle three times (twice over arcs contained in the leftmost 0--framed 2--handle and once over an arc in the uppermost  0--framed 2--handle), and finally cancel two 2--handles. To go from the upper right figure to the lower left, slide the non-oval 2--handle over the $(-1)$--framed 2--handle and cancel two 2--handles. To get to the middle diagram on the bottom row, just isotope. The final diagram on the lower right is then obtained by blowing up a $(+1)$--framed unknot to unlink the 0 and $(-2)$--framed handles, blowing down the resulting $(-1)$--framed handle and one of the $(+1)$--framed handles. Finally blowing down the $(+1)$--framed unknot in the lower right diagram yields 2--surgery on the knot $K$.}
\label{kc}
\end{figure}

The fundamental group of this manifold can be presented as
\[
\langle   a,b,c|
   acb^{-1}a^{-1}c^{-1}acbc^{-1},
   aab^{-1}cbabc^{-1},
   aba^{-1}b^{-1}cc\rangle.
\]
(This may be worked out from a presentation of the fundamental group of the knot complement or using SnapPy.) Notice that adding the relations $a=1$ and $b^3=1$ to the presentation gives a presentation for the dihedral group $D_3$ and so the group is not $\Z_2$. Hence by the same argument as in the proof of Lemma~\ref{lem2}, $(\Sigma'', \phi'')$ is not a stabilization.
\end{proof}

%
%

\section{Destabilizable open book decompositions of $S^3$}

This section is devoted to showing that all planar open books with 4 or fewer boundary components that support the standard tight contact structure on $S^3$ destabilize. 

\begin{proof}[Proof of Theorem~\ref{thm2}]
We begin by making some general observations about planar open book decompositions for the tight contact structure on $S^3$.
Suppose $(\Sigma, \phi)$ is such a planar open book decomposition. Let $\partial \Sigma=c_0\cup c_1\cup\ldots\cup c_n$, $n\geq0$. We will think of $\Sigma$ as a disk with $n$ disjoint sub-disks removed and think of $c_0$ as the boundary of the original disk. 

Since $(S^3, \xi_{std})$ is Stein fillable we can use Corollary~2 in \cite{Wendl10} to see that the monodromy $\phi$ has a positive factorization $\tau_{\gamma_{1}}\circ\tau_{\gamma_{2}}\circ\ldots\circ\tau_{\gamma_{m}}$, where $\gamma_{i}$, $i=1,2, \ldots, m$, is an essential simple closed curve in $\Sigma$. From this positive factorization, we can construct a Lefschetz fibration of a Stein filling of $(S^3, \xi_{std})$ whose Euler characteristic is $2-(n+1)+m$, see \cite{Giroux02}.  According to \cite{Eliashberg90b}, $D^{4}$ is the unique Stein filling of $(S^3, \xi_{std})$. Since $\chi(D^4)=1$ we have $2-(n+1)+m=1$, and hence, $m=n$.

If $\gamma_{i}$, $1\leq i\leq n$, encloses the boundary components $c_{i_1}, \ldots, c_{i_k}$, where $i_1, \ldots, i_k\in \{1, 2, \ldots, n\}$, then denoting the homology class of a curve $c$ by $[c]$ we see that 
\[
[\gamma_i]=[c_{i_1}]+ \ldots +[c_{i_k}]
\]
in $H_1(\Sigma)$. Let $M$ be the $n\times n$ matrix whose $i,j$ entry is 1 if $[c_j]$ is part of the homology expansion of $\gamma_i$ and 0 otherwise.  Thus we see that $[\gamma_i]=\sum_j m_{i,j}[c_j]$. One may easily check that a presentation for the first homology of the 4--manifold described as a Lefschetz fibration by $(\Sigma,\phi)$, and the factorization of $\phi$, is 
\[
\langle
[c_1],\ldots, c_n]| [\gamma_1],\ldots, [\gamma_n]
\rangle.
\]
Thus the homology will be trivial if and only if $\det M=\pm1$.  

We now return to the proof of the theorem. If $\Sigma$ has 2 or 3 boundary components then it is clear that it must destabilize. So we are left to consider the case when $\Sigma$ has 4 boundary components, that is when $n=3$. By the above argument, there are $3$ essential simple closed curves $\gamma_{1}, \gamma_{2}, \gamma_{3}$ such that $\phi=\tau_{\gamma_{1}}\circ\tau_{\gamma_{2}}\circ\tau_{\gamma_{3}}$. Moreover there is a $3\times 3$ matrix $M$ with entries 0's and 1's representing the homological relations between the $\gamma_i$ and $c_j$. In addition $\det M=\pm1$. 

Notice that since $\Sigma$ has only 4 boundary components, each $\gamma_i$ is either parallel to one of the boundary components or bounds a region containing exactly two boundary components. 

\smallskip
\noindent
\textbf{Case 1}: each $\gamma_i$ is boundary parallel.  We get a destabilizing arc by taking an arc from the boundary compoent not parallel to a $\gamma_i$ to one of the other boundary components.

\smallskip
\noindent
\textbf{Case 2}: there are exactly two $\gamma_i$ which are boundary parallel. With out loss of generality we suppose $\gamma_{1}$ and $\gamma_{2}$ are parallel to $c_1$ and $c_2$, respectively. We notice that $\gamma_{3}$ cannot enclose $c_1$ and $c_2$, since if it does $\det M\neq\pm1$. So $\gamma_{3}$ enclose $c_3$ and one of $c_1$ and $c_2$, say $c_1$.  We can now find a stabilizing arc connecting $c_1$ to $c_3$.

\smallskip
\noindent
\textbf{Case 3}: each $\gamma_i$ is not boundary parallel. Then each $\gamma_i$ encloses $2$ boundary components. Either two $\gamma_i$ enclose the same two boundary components, or $\gamma_{1}, \gamma_{2}, \gamma_{3}$ enclose $3$ different pairs of boundary components. In either cases, $\det M\neq\pm1$.

\smallskip
\noindent
\textbf{Case 4}: there is exactly one $\gamma_i$ which is boundary parallel. Without loss of generality we suppose that $\gamma_1$ is parallel to the boundary component $c_1$. In order for $\det M$ to equal $\pm1$ we cannot have $\gamma_2$ and $\gamma_3$ enclosing the same boundary components. Up to diffeomorphism (and relabeling the boundary components), we can assume that $\gamma_2$ is a convex curve enclosing $c_1$ and $c_2$, see Figure~\ref{fig:ob4}.
\begin{figure}[htb]
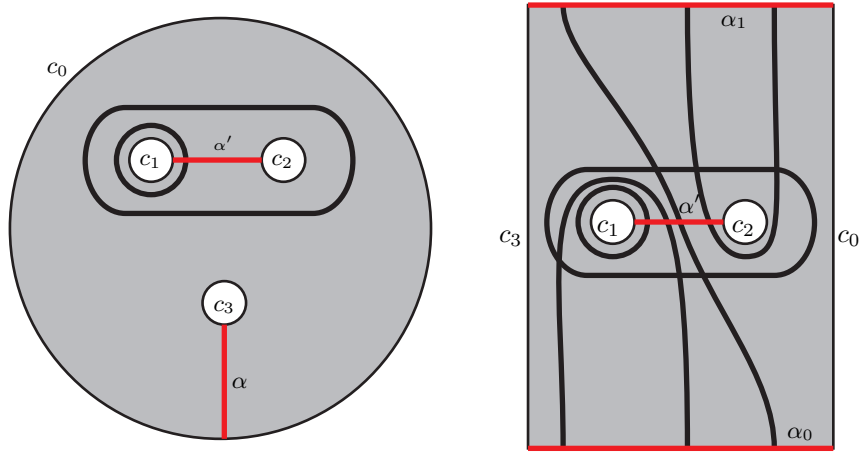

\begin{overpic}
{OB4}
{\small \put(15, 144){$c_0$}
\put(50,109){$c_1$}
\put(100,109){$c_2$}
\put(78, 54){$c_3$}
{\tiny \put(78, 115){$\alpha'$}}
\put(85,25){$\alpha$}
\put(295, 6){$\alpha_0$}
\put(270, 162){$\alpha_1$}
\put(254,91){$\alpha'$}}
\put(223,84){$c_1$}
\put(274, 84){$c_2$}
\put(314, 80){$c_0$}
\put(186, 80){$c_3$}
\end{overpic}
\caption{The surface $\Sigma$ with boundary components labeled and the curves $\alpha, \alpha', \gamma_1$ and $\gamma_2$ on the lefthand side. On the righthand side is the surface cut open with a potential $\gamma_3$ curve.}
\label{fig:ob4}
\end{figure}
We will assume that $\gamma_3$ encloses $c_2$ and $c_3$ (the only other possibility is that it encloses $c_1$ and $c_3$, but in this case either $\gamma_3$ intersects $\alpha$ one time and we can destabilize the open book or it does not and an argument exactly like the one below will show that the resulting manifold is not $S^3$).

Let $\alpha$ be an arc in $\Sigma$ which connects $c_0$ and $c_3$ and is disjoint from $\gamma_2$, see Figure~\ref{fig:ob4}.  If $\gamma_3$ intersects $\alpha$ once then it is a stabilizing arc. We are left to consider the case when $\gamma_3$ intersects $\alpha$ more than once (we note that since $\gamma_3$ is homologous to $[c_1]+[c_3]$ the algebraic intersection with $\alpha$ is 1). We claim in this case that $(\Sigma,\phi)$ is not an open book for $S^3$. To see this notice that if $\phi'$ is $\phi$ without the Dehn twist about $\gamma_3$ then $(\Sigma,\phi')$ is an open book decomposition for $S^2\times S^1$ and $\gamma_3$, sitting on one of the pages of the open book, is a knot in $S^2\times S^1$, see Figure~\ref{fig:s2s1}. Moreover, $(-1)$--surgery on $\gamma_3$ (here we mean that we are doing surgery with framing 1 less than the page framing) gives the manifold described by the open book $(\Sigma, \phi)$. Recall that Gabai \cite{Gabai87} proved that the Property R conjecture is true, that is the only knot in $S^2\times S^1$ that can be surgered to yield $S^3$ is the ``trivial knot" $\{p\}\times S^1$. In particular if $K$ is a knot in $S^2\times S^1$ which can be surgered to give $S^3$ then its complement has fundamental group $\Z$. We will complete the proof of our claim by showing that the complement of $\gamma_3$ thought of as a knot in $S^2\times S^1$ as above does not have fundamental group $\Z$.  

Consider the arc $\alpha'$ that connects $c_1$ to $c_2$. Isotope $\gamma_3$ on $\Sigma$ to intersect $\alpha$ and $\alpha'$ minimally. To understand how $\gamma_3$ sits in $S^2\times S^1$ we proceed as follows. Let $\Sigma'$ be $\Sigma$ cut open along $\alpha$. In the boundary of $\Sigma'$ there are two copies of $\alpha$ which we call $\alpha_0$ and $\alpha_1$. Notice that $\gamma_3$ will be cut into a collection of arcs in $\Sigma'$. Each such arc is of one of three kinds. Its endpoints are either both on $\alpha_0$, both on $\alpha_1$ or and one end point on each. We call these Type~I, II, and~III arcs, respectively. Notice there must be an odd number of Type~III arcs due to $\gamma_3$'s homological intersection with $\alpha$. Also notice that each Type~I and~II arc must intersect $\alpha'$ exactly once. (This is clear since if one were disjoint then that arc would ``shield" $c_1$ and $c_2$ from all arcs of a different type and that will contradict the minimality of the intersection of $\gamma_3$ with $\alpha$. Moreover if an arc intersected more than once then it will contradict the minimality of the intersection of $\gamma_3$ with $\alpha'$.) We also notice that a Type~III arc must also intersect $\alpha'$ exactly once since if it were disjoint then some of the Type~I or~II arcs would not intersect $\alpha'$ contradicting the above observation and if it intersected more than once we would again violate the minimality of the intersection of $\gamma_3$ and $\alpha'$. 

Let $S_\alpha$ be the non-separating 2-sphere in $S^2\times S^1$ defined by $\alpha$ (see the proof of Lemma~\ref{lem:de0} for the construction of $S_\alpha$). Let $X$ denote the complement of $\gamma_3$ in $S^2\times S^1$ and $F_\alpha=S_\alpha\cap X$. We claim that $F_\alpha$ is an incompressible surface in $X$. Since $F_\alpha$ is planar and has at least 3 boundary components the fundamental group of $F_\alpha$ is a free group on at least 2 generators. The incompressibility implies that $\pi_1(X)$ contains this free group and hence is not $\Z$. Thus our claim is complete once we see that $F_\alpha$ is incompressible. 

Cutting $S^2\times S^1$ open along $S_\alpha$ yields $S^2\times [0,1]$ and is depicted on the righthand side of Figure~\ref{fig:s2s1} along with some representative Type~I, II and~III arcs. While the arcs do not have to look exactly as in the figure the salient feature is that, due to the discussion above, they will all go through the leftmost box in the figure indicating the $(-1)$--twisting. 
\begin{figure}[htb]
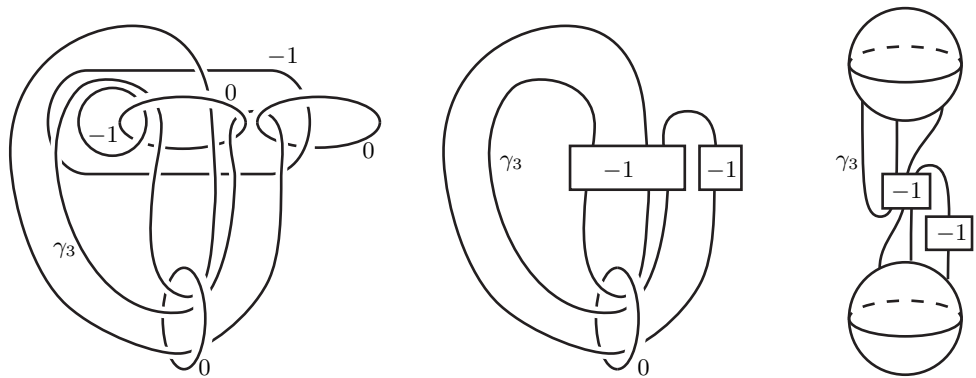

\begin{overpic}
{s2s1}
\small{
\put(353, 52){$-1$}
\put(336, 68){$-1$}
\put(315, 80){$\gamma_3$}
\put(188, 80){$\gamma_3$}
\put(227, 76){$-1$}
\put(266, 76){$-1$}
\put(240, 1){$0$}
\put(74,1){$0$}
\put(136, 83){$0$}
\put(84,105){$0$}
\put(100,119){$-1$}
\put(32,89){$-1$}
\put(19,47){$\gamma_3$}
}\end{overpic}
\caption{The 3--manifold $S^2\times S^1$ described by the open book $(\Sigma,\phi')$ along with a sample $\gamma_3$ on the lefthand side. The same manifold in the center and on the right hand side this manifold is cut open along $S_\alpha$. Each box indicates that a complete lefthanded twist has been added to the strands going through the box.}
\label{fig:s2s1}
\end{figure}

Now let $D$ be a compressing disk for $F_\alpha$ in $X$. We can cut $X$ along $F_\alpha$ to get a manifold $Y\subset S^2\times [0,1]$ with two copies of $F_\alpha$ in its boundary, we denote them by $F_0$ and $F_1$. Notice that $D$ will be a disk in $Y$ with boundary on either $F_0$ or $F_1$, with out loss of generality assume it is on $F_0$. There is a disk $D'$ in $S_\alpha$  such that $D\cup D'$ bounds a ball in $S^2\times S^1$ (and in $S^2\times S^1$ cut open along $S_\alpha$). Notice that if a Type~I arc has one endpoint in $D'$, respectively $S_\alpha\setminus D'$, then the other endpoint has to be in $D'$, respectively $S_\alpha\setminus D'$, too since the arc must be disjoint form $D$. Similarly all the endpoints of Type~III arcs must be in $S_\alpha\setminus D'$ since they too must be disjoint from $D$. Finally we notice that some of the endpoints of Type I arcs must be in $D'$ or else $D$ is not a compressing disk for $F_\alpha$. But this is a contradiction. Indeed glue two 3--balls to $S^2\times S^1\setminus S_\alpha$ to get $S^3$. We can close each Type~I and~II arc in these added 3--balls to get unknots and connect the endpoints of each Type~III arc by a curve disjoint from all the arcs in the righthand diagram of Figure~\ref{fig:s2s1}. We can also push $D'$ slightly into one of the 3--balls so that it is disjoint form the added arcs. Now $D\cup D'$ is an embedded sphere in $S^3$ that bounds a ball containing some of the unknots associated to the Type~I arcs and does not contain any of the unknots associated to the Type~III arcs. Thus these respective unknots must be unlinked, but it is clear form the picture that they have linking $\pm 1$. 
\end{proof}


\begin{thebibliography}{10}

\bibitem{BaaderIshikawa09}
Sebastian Baader and Masaharu Ishikawa, \emph{Legendrian graphs and
  quasipositive diagrams}, Ann. Fac. Sci. Toulouse Math. (6) \textbf{18}
  (2009), no.~2, 285--305. \MR{2562830 (2011c:57005)}

\bibitem{BakerEtnyreVanHorn-Morris12}
Kenneth~L. Baker, John~B. Etnyre, and Jeremy Van Horn-Morris, \emph{Cabling,
  contact structures and mapping class monoids}, J. Differential Geom.
  \textbf{90} (2012), no.~1, 1--80. \MR{2891477}

\bibitem{Eliashberg90b}
Yakov Eliashberg, \emph{Filling by holomorphic discs and its applications},
  Geometry of low-dimensional manifolds, 2 (Durham, 1989), London Math. Soc.
  Lecture Note Ser., vol. 151, Cambridge Univ. Press, Cambridge, 1990,
  pp.~45--67. \MR{MR1171908 (93g:53060)}

\bibitem{Eliashberg92a}
\bysame, \emph{Contact {$3$}-manifolds twenty years since {J}. {M}artinet's
  work}, Ann. Inst. Fourier (Grenoble) \textbf{42} (1992), no.~1-2, 165--192.
  \MR{MR1162559 (93k:57029)}

\bibitem{Etnyre06}
John~B. Etnyre, \emph{Lectures on open book decompositions and contact
  structures}, Floer homology, gauge theory, and low-dimensional topology, Clay
  Math. Proc., vol.~5, Amer. Math. Soc., Providence, RI, 2006, pp.~103--141.
  \MR{MR2249250}

\bibitem{Gabai87}
David Gabai, \emph{Foliations and the topology of {$3$}-manifolds. {III}}, J.
  Differential Geom. \textbf{26} (1987), no.~3, 479--536. \MR{910018
  (89a:57014b)}

\bibitem{Giroux02}
Emmanuel Giroux, \emph{G\'eom\'etrie de contact: de la dimension trois vers les
  dimensions sup\'erieures}, Proceedings of the International Congress of
  Mathematicians, Vol. II (Beijing, 2002) (Beijing), Higher Ed. Press, 2002,
  pp.~405--414. \MR{MR1957051 (2004c:53144)}

\bibitem{GirouxGoodman06}
Emmanuel Giroux and Noah Goodman, \emph{On the stable equivalence of open books
  in three-manifolds}, Geom. Topol. \textbf{10} (2006), 97--114 (electronic).
  \MR{MR2207791 (2006k:57046)}

\bibitem{Harer82}
John Harer, \emph{How to construct all fibered knots and links}, Topology
  \textbf{21} (1982), no.~3, 263--280. \MR{649758 (83e:57007)}

\bibitem{Harer86}
John~L. Harer, \emph{The virtual cohomological dimension of the mapping class
  group of an orientable surface}, Invent. Math. \textbf{84} (1986), no.~1,
  157--176. \MR{830043 (87c:32030)}

\bibitem{Hedden10}
Matthew Hedden, \emph{Notions of positivity and the {O}zsv\'ath-{S}zab\'o
  concordance invariant}, J. Knot Theory Ramifications \textbf{19} (2010),
  no.~5, 617--629. \MR{2646650 (2011j:57020)}

\bibitem{HondaKazezMatic07}
Ko~Honda, William~H. Kazez, and Gordana Mati{\'c}, \emph{Right-veering
  diffeomorphisms of compact surfaces with boundary}, Invent. Math.
  \textbf{169} (2007), no.~2, 427--449. \MR{MR2318562 (2008e:57028)}

\bibitem{MelvinMorton86}
P.~M. Melvin and H.~R. Morton, \emph{Fibred knots of genus {$2$} formed by
  plumbing {H}opf bands}, J. London Math. Soc. (2) \textbf{34} (1986), no.~1,
  159--168. \MR{859157 (87m:57006)}

\bibitem{Moser71}
Louise Moser, \emph{Elementary surgery along a torus knot}, Pacific J. Math.
  \textbf{38} (1971), 737--745. \MR{0383406 (52 \#4287)}

\bibitem{SaitoYamamoto10}
Toshio Saito and Ryosuke Yamamoto, \emph{Complexity of open book decompositions
  via arc complex}, J. Knot Theory Ramifications \textbf{19} (2010), no.~1,
  55--69. \MR{2640991 (2011e:57030)}

\bibitem{Stallings78}
John~R. Stallings, \emph{Constructions of fibred knots and links}, Algebraic
  and geometric topology (Proc. Sympos. Pure Math., Stanford Univ., Stanford,
  Calif., 1976), Part 2, Proc. Sympos. Pure Math., XXXII, Amer. Math. Soc.,
  Providence, R.I., 1978, pp.~55--60. \MR{MR520522 (80e:57004)}

\bibitem{ThurstonWinkelnkemper75}
W.~P. Thurston and H.~E. Winkelnkemper, \emph{On the existence of contact
  forms}, Proc. Amer. Math. Soc. \textbf{52} (1975), 345--347. \MR{MR0375366
  (51 \#11561)}

\bibitem{Wand??}
Andy Wand, \emph{Factorizations of diffeomorphisms of compact surfaces with
  boundary},  (2009). arXiv: 0910.5691v3

\bibitem{Wendl10}
Chris Wendl, \emph{Strongly fillable contact manifolds and {$J$}-holomorphic
  foliations}, Duke Math. J. \textbf{151} (2010), no.~3, 337--384. \MR{2605865
  (2011e:53147)}

\end{thebibliography}

\def\cprime{$'$} \def\cprime{$'$}
\providecommand{\bysame}{\leavevmode\hbox to3em{\hrulefill}\thinspace}
\providecommand{\MR}{\relax\ifhmode\unskip\space\fi MR }
\providecommand{\MRhref}[2]{%
  \href{http://www.ams.org/mathscinet-getitem?mr=#1}{#2}
}
\providecommand{\href}[2]{#2}

\end{document}